\newcommand{\R}{\mathbb{R}} 
\newcommand{\N}{\mathbb{N}} 
\newcommand{\Z}{\mathbb{Z}} 
\newtheorem{theorem}{{Theorem}}[section]
\newtheorem{lema}[theorem]{{Lemma}}
\newtheorem{corol}[theorem]{{Corollary}}
\newtheorem{defin}[theorem]{{Definition}}
\newtheorem{prop}[theorem]{{Proposition}}
\newtheorem{maintheorem}{{Main Theorem}}
\newenvironment{sproof}{%
  \proof}{\endproof}
  \newenvironment{remark}{%
  \proof}
\def \A {{\mathcal A}}
\def \N{\mathbb N}
\def \O {{\mathcal O}}
\def \B {{\mathcal B}}
\def \dist {\text {dist}}
\def \Z {{\mathbb Z}}
\def \R {{\mathbb R}}
\title[Dimension of uniform attractors with infinite dimensional symbol space]{Finite fractal dimension of uniform attractors for non-autonomous dynamical systems with infinite dimensional symbol space}
\author{A. N. Carvalho, J. A. Langa and R. O. Moura}
\address[ANC and ROM]{
Instituto de Ci\^{e}ncias Ma\-te\-m\'{a}\-ti\-cas e de Computa\c{c}\~{a}o 
Universidade de S\~{a}o Paulo, Campus de S\~{a}o Carlos, Caixa Postal 668, S\~{a}o Carlos SP, Brazil.}
\email[ANC]{andcarva@icmc.usp.br}
\email[ROM]{rfomoura@usp.br}
\address[JAL]{Depto. Ecuaciones Diferenciales y Anal. Num.\\
Facultad de Matem\'{a}ticas\\
Universidad de Sevilla\\
C/ Tarfia s/n\\
41012-Sevilla (Spain)}
\email[JAL]{langa@us.es}
\subjclass[2020]{ 35Q30, 35B41, 35K58, 76D05. }
\thanks{[ANC] Partially supported by FAPESP Grant \# 20/14075-6 and by CNPq Grant \# 308902/2023-8, Brazil}
\thanks{[JAL] Partially supported by Grant \# PID2021-122991NB-C21 Ministerio de Ciencia e Innovaci\'on, Spain}
\thanks{[ROM] Supported  by Grants \#2022/04886-2 and \#2023/11798-5, São Paulo Research Foundation (FAPESP)}
\begin{document}

%
\begin{abstract}
The aim of this paper is to find an upper bound for the box-counting dimension of uniform attractors for non-autonomous dynamical systems. Contrary to the results in literature, we do not ask the symbol space to have finite box-counting dimension. Instead, we ask a condition on the semi-continuity of pullback attractors of the system as time goes to infinity. This semi-continuity can be achieved if we suppose the existence of finite-dimensional exponential uniform attractors for the limit symbols.

After showing these new results, we apply them to study the box-counting dimension of the uniform attractor for a reaction-diffusion equation, and we find a specific forcing term such that the symbol space has infinite box-counting dimension but the uniform attractor has finite box-counting dimension anyway.
\end{abstract}

	\maketitle
	
	\thispagestyle{empty} 

\section{Introduction}

The study of qualitative properties of non-autonomous dynamical systems in infinite dimension Banach spaces has received a lot of attention in literature (\cite{kpm}, \cite{booknolanovo}, \cite{klo}, \cite{efendiev}, \cite{chep}, \cite{nola}). These systems model several kinds of real-life phenomena in areas like Physics, Chemistry, Biology, Economy and even Social Sciences. 

In particular, the theory of attractors for autonomous dynamical systems and pullback attractors, uniform attractors and skew-product attractors for non-autonomous dynamical systems originated a very fruitful research field (see references above). One aspect that is particularly important is the possibility of describing these attractors as finite dimensional objects estimating their fractal dimension (\cite{embed}). For uniform attractors those estimates are available when the space of symbols has finite fractal dimension which considerably restricts the applicability of the results (\cite{cuietal}).

In this paper we direct our efforts to the cases in which the symbol space has infinite fractal dimension or is not even compact but the uniform attractor has finite fractal dimension. There are, of course, cases for which the dimension of the symbol space works as a lower bound for the fractal dimension of uniform attractor but, as we show in this paper, there are also many situations for which the dimension of the symbol space plays no role in the dimension of the uniform attractor.

 To motivate our studies and to better explain the results in this paper let us consider a particular model - our results, however, will be abstract and general. Consider the following reaction-diffusion equation:
\begin{equation} \label{intazim}
\begin{split}
&v_t  = \Delta v + f( v )+g(t),\quad x\in\mathcal{O},\ t>\tau,\\
&v (x,\tau)= v _{\tau}(x),\quad x\in\mathcal{O},\\
&v (x,t)=0,\quad x\in\partial\mathcal{O},\ t\geqslant 0,
\end{split} 
\end{equation}

\noindent where ${\mathcal{O}}\subset\mathbb{R}^{N}$, ($N\in\mathbb{N}$) is a bounded domain with smooth boundary $\partial\Omega$. 

Let $X = (L^2(\O), \| \cdot \|)$ and $\Xi:={\mathcal{C}}(\mathbb{R},L^2(\O)),$ the space of all continuous functions $\xi:\mathbb{R} \rightarrow L^2(\O),$ endowed with the Fréchet metric $d_{\Xi}$:
$$
d_{\Xi}(\xi_{1},\xi_{2}):=\sum_{n=0}^{\infty}\frac{1}{2^{n}}\frac{d^{(n)}(\xi_{1},\xi_{2})}{1+d^{(n)}(\xi_{1},\xi_{2})},\qquad\xi_{1},\xi_{2}\in\Xi, 
$$
where
$$
d^{(n)}(\xi_{1},\xi_{2}):=\max_{s\in[-n,n]} \|\xi_1(s) - \xi_2(s)\|,\qquad n\in\mathbb{N}. 
$$

We define the hull $\mathcal{H}_\Xi(g)$ of the non-autonomous function $g\in\Xi$ as:
$$
\mathcal{H}_\Xi(g):={\overline{{\{\theta_r g:r\in\mathbb{R}\}}}}, 
$$
where $\theta_r g = g(\cdot + r)$ and the closure is taken under the metric $d_{\Xi}$ of $\Xi$.

Under suitable assumptions (that will be stated later) on the  functions $f$ and $g$, for any $\sigma \in \Sigma = \mathcal{H}_\Xi(g)$, $\tau \in \R$, the initial boundary value problem
\begin{equation} \label{intmarjax}
\begin{split}
&v_t  = \Delta v + f( v )+\sigma(t),\quad x\in\mathcal{O},\ t>\tau,\\
&v (x,\tau)= v _{\tau}(x),\quad x\in\mathcal{O},\\
&v (x,t)=0,\quad x\in\partial\mathcal{O},\ t\geqslant 0,
\end{split} 
\end{equation}
possesses a unique solution $u(\cdot,\tau,u_0)$ defined for all $t\geqslant \tau$ and \eqref{intmarjax} gives rise to an evolution process $\{U_\sigma(t,\tau): t\geq \tau\}$ where $U_\sigma( \cdot,\tau) v_\tau =u(\cdot,\tau,v_\tau)$. Therefore, for any $\tau \in \R$, $t\geq \tau$ we can consider the operator $U_\sigma(t,\tau) \in \mathcal{C}(X)$, and we call $\{U_\sigma(t,\tau): t\geq \tau\}_{\sigma\in \Sigma}$ the system of processes associated to \eqref{intazim}. Studying this system of processes, we can capture all the asymptotic behavior of the solutions of \eqref{intazim}.

Assuming some additional dissipative conditions on $f$ and $g$ (that will be stated later), we can obtain the pullback attractor of equation \eqref{intazim} which is a family of compact sets $\{A(t):t\in \R\}$, $A(t) \subset X$ for all $t\in \R$, such that $\bigcup_{t\in \R} A(t)$ is bounded, $U_g(t,s)A(s) = A(t)$ for all $t\geq s$, and for every bounded set $B\subset X$ and $t \in \R$, we have $\dist_H(U_g(t,s)B, A(t)) \overset{s\to -\infty}{\longrightarrow} 0$, where $\dist_H(A,B) = \sup_{x\in A} \dist(x,B)$ is the Hausdorff semi-distance. The pullback attractor at time $t$ reflects the behavior at time $t$ of solutions that started a long time ago.

Moreover, we can define the uniform attractor as the compact set $\mathcal{A}_\Sigma \subset X$ that is the minimal closed set with the property that for every bounded subset $B\subset X$, we have
$$
\lim_{t\to \infty} \sup_{\sigma\in \Sigma} \dist_H(U_\sigma(t,0)B, \mathcal{A}_\Sigma) = 0.
$$

The uniform attractor has the property of representing the asymptotic forward behavior of equation \eqref{intazim} uniformly with respect to all symbols $\sigma\in \Sigma$ and in particular  represents the asymptotic forward behavior with respect to the elapsed time uniformly for all initial time $\tau \in \R$ .

When the nonlinear functions $f$ and $g$ do not depend on time $t$, we have $\mathcal{H}_\Xi(g) = \{g\}$, and the process $\{U_g(t, s): t\geq s\}$ satisfies $U_g(t+s,s) = U_g(t,0)$ for all $t\geq 0$, $s\in \R$. We then define $T(t) = U_g(t,0)$ and we get a semigroup $\{T(t): t\geq 0\}$ in $X$. In this case, we can define the global attractor of $\{T(t):t\geq 0\}$ as the non-empty, compact set $\A \subset X$ such that $T(t)\A = \A$ for all $t\geq 0$ and for any bounded set $B\subset X$, it holds $\dist_H(T(t)B,\A) \overset{t\to \infty}{\longrightarrow} 0$. In this simplified case, we have $A(t) = \mathcal{A}$ for all $t\in \R$, where $\{A(t):t\in \R\}$ is the pullback attractor of $\{U_g(t,s):t\geq s\}$.

Our main interest lies in the study of the box-counting (fractal) dimension of the uniform attractor $\mathcal{A}_\Sigma$. If $X$ is a metric space, for a compact set $K\subset X$, the box-counting dimension of $K$ is defined as follows:
$$
d_B(K) = \limsup_{r \to 0} \frac{\log N_X(K,r)}{- \log(r)}.
$$
where $N_X(K,r)$ is the minimal number of open balls in $X$ of radius $r > 0$ necessary to cover $K$.

One of the most important applications of the box-counting dimension is the following Theorem: 

\begin{theorem}
Let $X$ be a Banach space, $K\subset X$ be a compact set, $k$ be a positive integer such that $k>2 d_B(K)$, and $\theta \in \mathbb{R}$ such that:
\[
0<\theta< \frac{k-2d_B(K)}{k(1+\alpha d_B(K))},
\]
where $\alpha=1/2$ if $X$ is a Hilbert space and $\alpha=1$ if $X$ is a Banach space. Then there exists a prevalent set of bounded linear operators $L\in \mathcal{L}(X,\mathbb{R}^k)$ that are injective between $K$ and their image, and such that there exists $c_L>0$ such that 
\[
\|x-y\| \leq c_L |Lx-Ly|^\theta, \quad \forall \, x,y\in K.
\]

In other words, the inverse of $L$ defined on $LK$ is Hölder continuous.
\end{theorem}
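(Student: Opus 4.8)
The statement is the Hunt--Kaloshin embedding theorem, and my plan is to prove it through the measure-theoretic notion of \emph{prevalence}. Recall that a Borel set $S\subset\mathcal{L}(X,\mathbb{R}^k)$ is prevalent if there is a compactly supported Borel probability measure $\mu$ on $\mathcal{L}(X,\mathbb{R}^k)$ (a \emph{probe}) such that for every $L$ one has $L+P\in S$ for $\mu$-almost every $P$. Hence it suffices to exhibit a single probe $\mu$ for which, after translation by an arbitrary fixed $L$, $\mu$-almost every operator is injective on $K$ with H\"older inverse. Since both conclusions depend only on differences $x-y$, I would first replace $K$ by its compact \emph{difference set} $K-K=\{x-y:x,y\in K\}$, which satisfies $d_B(K-K)\le 2d_B(K)=:2d$, and restate the goal as: for $\mu$-a.e.\ $P$ there is $c>0$ with
\[
|(L+P)z|\ge c\,\|z\|^{1/\theta}\qquad\text{for all }z\in K-K,
\]
which is equivalent to the claimed H\"older bound with $c_L=c^{-\theta}$ and immediately forces injectivity on $K$.

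Next I would construct the probe and establish an \emph{anti-concentration} estimate. Using that $K$ has finite box dimension, I would build $\mu$ as the law of a random operator $P\colon X\to\mathbb{R}^k$ whose $k$ components are independent random linear functionals supported on a fixed sequence of finite-dimensional approximating subspaces of $K$, with coefficients drawn from a compactly supported density (so that $\mu$ has compact support and $\|L+P\|$ is $\mu$-a.s.\ bounded). The rate at which such subspaces approximate $K$ is measured by the \emph{thickness exponent} $\tau=\tau(K)$, and this is precisely where the Hilbert/Banach dichotomy enters: one has $\tau\le d$ in a Hilbert space (via orthogonal projections) and $\tau\le 2d$ in a general Banach space, i.e.\ $\tau/2\le\alpha d$. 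The key estimate I need is that for each fixed $z\ne 0$ the pushforward of $\mu$ under $P\mapsto Pz$ has suitably bounded density, yielding
\[
\mu\big(\{P: |(L+P)z|<\varepsilon\}\big)\le C\Big(\frac{\varepsilon}{\|z\|}\Big)^{k},
\]
valid down to scales dictated by the dimension of the approximating subspace, hence by $\tau$.

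With this in hand I would run a covering and Borel--Cantelli argument over dyadic scales. Writing $s=1/\theta$ and $\varepsilon_n=c\,2^{-ns}$, I would cover the piece $\{z\in K-K:\|z\|\approx 2^{-n}\}$ by $N_n$ balls of radius $\rho_n$ centered at points $z_j$, and call the $n$-th event \emph{bad} if $|(L+P)z_j|<2\varepsilon_n$ for some $j$. The anti-concentration estimate bounds its measure by $N_n\,C(\varepsilon_n/2^{-n})^k$. Taking $\rho_n\sim 2^{-ns}$ (needed for the oscillation control below) makes $N_n\lesssim\rho_n^{-2d}$, so the bad measure is $\lesssim 2^{2dsn-(s-1)kn}$; summability in $n$ gives the crude range $\theta<(k-2d)/k$, and incorporating the $\tau$-dependent degradation of the probe sharpens the threshold to $s>k(1+\alpha d)/(k-2d)$, that is, to the stated range $0<\theta<\frac{k-2d}{k(1+\alpha d)}$. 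Borel--Cantelli then places $\mu$-a.e.\ $P$ in only finitely many bad events, which—together with the transfer step—yields $|(L+P)z|\ge c'\|z\|^{s}$ for all small $z\in K-K$ and, by compactness at the finitely many remaining scales, on all of $K-K$.

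The main obstacle is the \emph{transfer step}: passing from a lower bound at the finitely many net points $z_j$ to one at every $z\in K-K$. Since $|(L+P)z_j|\le |(L+P)z|+\|L+P\|\,\|z-z_j\|$, a bound at $z_j$ descends to $z$ only when $\rho_n=\|z-z_j\|$ is small compared with $\varepsilon_n/\|L+P\|$; the a.s.\ bound on $\|L+P\|$ forces $\rho_n\sim 2^{-ns}$, which in turn raises the covering count from scale $2^{-n}$ to the finer scale $2^{-ns}$. Balancing this finer covering (few enough balls) against the requirement that the probe remain effective at that scale (which costs the factor governed by the thickness exponent $\tau\le 2\alpha d$) is the delicate heart of the argument and is exactly what pins down the sharp admissible $\theta$, with $\alpha=1/2$ in the Hilbert case and $\alpha=1$ in the Banach case.
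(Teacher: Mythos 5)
The paper does not actually prove this theorem: it is quoted as a known embedding result, with the attribution given in the sentence immediately following the statement (a first version due to Hunt and Kaloshin, later refined), so there is no in-paper proof to compare your argument against. Judged on its own, your sketch is a faithful roadmap of the standard prevalence proof from that literature: reduce to a lower bound $|(L+P)z|\ge c\|z\|^{1/\theta}$ on the difference set $K-K$ (with $d_B(K-K)\le 2d_B(K)$), build a compactly supported probe measure from random functionals on finite-dimensional approximating subspaces, prove an anti-concentration estimate for $P\mapsto (L+P)z$, and run a dyadic covering plus Borel--Cantelli argument, with the transfer from net points to arbitrary $z$ forcing the finer nets of radius $\sim 2^{-ns}$. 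You also correctly locate where the Hilbert/Banach dichotomy enters (the thickness exponent $\tau$ versus $d_B$, giving $\tau/2\le\alpha d$). The one place where the write-up is materially weaker than a proof is the anti-concentration step: as stated, $\mu\bigl(\{P:|(L+P)z|<\varepsilon\}\bigr)\le C(\varepsilon/\|z\|)^k$ with an absolute constant $C$ is not what one actually gets; the constant degrades with the dimension $m$ of the approximating subspace (roughly polynomially in $m$), and it is exactly the interplay between this degradation, the growth of $m$ as the approximation scale shrinks (governed by $\tau$), and the finer $2^{-ns}$ nets that produces the factor $1+\alpha d_B(K)$ in the denominator of the admissible range for $\theta$. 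You flag this as the ``delicate heart'' but do not carry out the balancing, so the proposal is a correct outline rather than a complete proof; filling in that computation (or citing the source, as the paper does) is what remains.
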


A first version of this theorem was shown in \cite{soso}, and the result was improved in \cite{sosos,sososs}. The theorem shows that a uniform attractor with finite box-counting dimension could be injectively projected into a finite-dimensional space, and the inverse is Hölder continuous, which implies a limitation on the complexity of the dynamical objects contained in this attractor.

Therefore, it is important to find conditions under which attractors have finite box-counting dimension.

The authors in \cite{bispace} show that a broad class of semigroups has a global attractor with finite box-counting dimension. The main hypothesis — which we call \textit{smoothing} — is that for some $t_0\geq 0$, the semigroup $S(t_0)$ can be decomposed as the sum $S(t_0)=L+C$ where $L \in \mathcal{L}(X)$ satisfies $\|L\|<1$ and the operator $C$ compactifies the phase plane, more precisely, there exists a space $Y$ such that $X$ is compactly embedded in $Y$, and the following holds:
$$
\|Cx-Cy\|_X \leq \|x-y\|_Y, \quad x,y \in \mathcal{A}.
$$

A study on the box-counting dimension of pullback attractors is done in \cite{estefanizona}, similarly to the study of the box-counting dimension of global attractors in \cite{bispace}. The study of the box-counting dimension of uniform attractors of systems of evolution processes is being carried out recently, with one of the latest contributions in the articles \cite{cuietal, efendiev}, where the authors proved that the uniform attractor has finite box-counting dimension under the assumption that the symbol space $\Sigma$ has finite box-counting dimension in $\Xi$ (for example, this happens when $g$ is a quasi-periodic function — see \cite{chep}). This assumption is quite restrictive, and we wish to refine these results to avoid asking $\Sigma$ to be finite dimensional. Indeed, we will give a non-trivial example — actually, a class of examples — where the box-counting dimension of the symbol space $\Sigma$ is infinite, but the uniform attractor $\mathcal{A}_\Sigma$ has finite box-counting dimension.

Our strategy will be as follows. We will consider the case where $\Sigma = \mathcal{H}_\Xi(g)$ can be decomposed as:
\begin{equation}\label{intgorux}
\mathcal{H}_\Xi(g) = \overline{\{\theta_t g :t\in \R\}} \subset \Sigma_- \cup \{\theta_t g : t\in \R\} \cup \Sigma_+.
\end{equation}
where $\Sigma_-$, $\Sigma_+\subset \Xi$. This happens, for example, when $g$ approaches functions $g_-$ as $t\to -\infty$ and $g_+$ as $t\to \infty$, so we are able to decompose $\Sigma$ as in \eqref{intgorux}, with $\Sigma_- = \mathcal{H}_\Xi(g_-)$ and $\Sigma_+ = \mathcal{H}_\Xi(g_+)$. More precisely, if we define
$$
\alpha(g) = \{\sigma: \text{ there exists a sequence } t_n \overset{n\to \infty}{\longrightarrow} - \infty \text{ such that } \theta_{t_n}g \overset{n\to \infty}{\longrightarrow} \sigma \}
$$
$$
\omega(g)= \{\sigma: \text{ there exists a sequence } t_n \overset{n\to \infty}{\longrightarrow} \infty \text{ such that } \theta_{t_n}g \overset{n\to \infty}{\longrightarrow} \sigma \}
$$
then it is easy to show that \eqref{intgorux} happens if $\alpha(g)\subset \Sigma_-  $ and $\omega(g) \subset \Sigma_+  $.

Since we can prove that (\cite{booknolanovo}) 
$$
\mathcal{A}_\Sigma = \bigcup_{\sigma\in \Sigma} A_\sigma(0)
$$
where $A_\sigma(0)$ is the pullback attractor of $\{U_\sigma(t,s):t\geq s\}$ at time $0$, we can use \eqref{intgorux} to obtain:
\begin{equation}
\mathcal{A}_\Sigma \subset \mathcal{A}_{\Sigma_-} \cup \mathcal{A}_{\Sigma_+} \cup \bigcup_{t\in \R} A_{\theta_t g}(0) = \mathcal{A}_{\Sigma_-} \cup \mathcal{A}_{\Sigma_+} \cup \bigcup_{t\in \R} A_{g}(t) .
\end{equation}

When $\Sigma_-$ and $\Sigma_+$ have finite box-counting dimension (for example, when they are the hull of quasi-periodic functions $g_-$ and $g_+$), we can use the existing theorems in \cite{cuietal, efendiev} to show that $\mathcal{A}_{\Sigma_-}$ and $\mathcal{A}_{\Sigma_+}$ have finite box-counting dimension as well, and the only set that we need to worry about is 
\begin{equation}\label{intzoooox}
\bigcup_{t\in \R} A_{g}(t).
\end{equation}

In \cite{estefanizona}, it is proven that, under the smoothing conditions, the box-counting dimension of the sets $A_g(t)$ can be bounded uniformly in $t\in \R$. However, as presented in \cite[Chapter 6]{galox}, the union \eqref{intzoooox} can have infinite box-counting dimension even when the box-counting dimension of the sets $A_g(t)$ are uniformly controlled. To bound the box-counting dimension of this union, we ask that the pullback attractors accumulate into sets $\mathcal{A}_+$ and $\mathcal{A}_-$ at an exponential or polynomial rate, and that the evolution process $\{U_\sigma(t,s): t\geq s\}$ is Hölder continuous in time variable. This result is shown below:

\begin{maintheorem}\label{intdenras}
    Let $\{U_\sigma(t,s)\}_{\sigma \in \Sigma}$ be a system of processes in $X$ and for $\psi \in \Sigma$, suppose that $\{U_\psi(t,s)\}$ has a pullback attractor $\{A(t): t\in \R\}$. Suppose that $\mathcal{B}$ is a closed, bounded uniformly absorbing set of the system such that the following properties hold:

\begin{enumerate}[label=$(H_{\arabic*})$]
    \item \label{intaux} There is an auxiliary Banach space $Y$ compactly embedded in $X$.
     \item \label{intsmoothing}  $\{U_\sigma(t,s)\}_{\sigma \in \Sigma}$ is uniformly smoothing on the set $\mathcal{B}$, that is, for any $t>0$, there exists $\kappa(t)$ such that
     $$
    \underset{\sigma\in \Sigma}{\sup} \|U_\sigma(t,0)x - U_\sigma(t,0)y\|_Y \leq \kappa(t) \|x-y\|_X, \quad \forall \, x,y \in \mathcal{B}.
     $$

     \item \label{intholder} For $s\in \R$, $x$, $y\in \mathcal{B}$, we have:
     $$
     \| U_\sigma(t+s,s)x - U_\sigma(\Tilde{t}+s,s)y \| \leq C(r) (\|x-y\|^\gamma + |t-\Tilde{t}|^\theta),
    $$ for $t$, $\Tilde{t} \in [0,r]$ where $C(r)$ is uniform for $s \in \R$, $\sigma \in \Sigma$.
\end{enumerate}

 \begin{enumerate}[label=$(A_{\arabic*})$]
     \item \label{intentering} There exist asymptotic sets $\mathcal{A}_-$ and $\mathcal{A}_+$ such that $d_B(\mathcal{A}_-) = d_- < \infty$ and $d_B(\mathcal{A}_+) = d_+ <\infty$ and a strictly decreasing function $f: \R^+ \to \R^+$, $f(t) \overset{t\to \infty}{\longrightarrow} 0$ and $\Tilde{T}\geq 0$ such that
     $$\dist_H(A(t), \mathcal{A}_+) \leq f(t), \quad \forall \, t \geq \Tilde{T}$$ and $$\dist_H(A(t), \mathcal{A}_-) \leq f(|t|), \quad \forall \, t \leq -\Tilde{T}.$$
 \end{enumerate}

    Moreover, let $\tau$ be the time when $\mathcal{B}$ absorbs itself, that is:
    \begin{equation}\label{intfatal}
    \bigcup_{\sigma\in \Sigma} U_\sigma(t,0)\mathcal{B} \subset \B , \quad \forall \, t\geq \tau,  
    \end{equation}
and let $N_{\frac{1}{4\kappa(\tau)}}$ denote the number of balls of radius $\frac{1}{4\kappa(\tau)}$ in $X$ necessary to cover the unit ball $B_Y(0,1)$ in $Y$.

    Then the following happens:

     \begin{enumerate}[label=$(T_{\arabic*})$]
         \item \label{intT1} If $f(t) = K / t^r$, for some $K \geq 1$ and $r\in (0,\infty)$, we have:
         \begin{equation*}
            d_B\left ( \bigcup_{t\in \R} A(t) \right) \leq \max \left (d_+, \, d_-, \, \frac{1}{r} + \frac{1}{\theta} + \frac{1}{\gamma}\cdot \log_2 N_{\frac{1}{4\kappa(\tau)}} \right )
         \end{equation*}
         \item \label{intT2} If $f(t) = K e^{- \xi t}$, for some $K\geq 1$ and $\xi > 0$, 
         \begin{equation*}
             d_B\left ( \bigcup_{t\in \R} A(t) \right) \leq \max \left (d_+, \, d_-, \, \frac{1}{\theta} + \frac{1}{\gamma}\cdot \log_2 N_{\frac{1}{4\kappa(\tau)}} \right )
         \end{equation*}
     \end{enumerate}

\end{maintheorem}

Theorem \ref{intdenras} handles the box-counting dimension of \eqref{intzoooox} when the sets $A_g(t)$ converge to asymp\-totic sets $\mathcal{A}_+$ and $\mathcal{A}_-$ at a polynomial or exponential rate $f(t)$. Then we need to find conditions to ensure that this convergence occur. In order to obtain this, we can treat the pullback attractors $A_g(t)$ as perturbations of the limit attractor $\mathcal{A}_+ = \mathcal{A}_{\Sigma_+}$, using $\Sigma_+ = \mathcal{H}_\Xi(g_+)$, where $d_\Xi(\theta_t g, \theta_t g_+) \overset{t\to \infty}{\longrightarrow} 0$. This allows us to use a similar proof as
\cite[Theorem 8.2.1]{babin} (which is a theorem about upper semi-continuity of perturbed attractors). 

With this reasoning, we obtain Proposition \ref{barol}, that states roughly that if there exists $\Sigma_-$ and $\Sigma_+$ that are compact invariant subsets of $\Sigma$ such that, for constants $C\geq 1$, $\eta_1$, $\eta_2 > 0$,
$$
\dist(\theta_t g, \Sigma_-) \leq C e^{\eta_1 t},\quad t\in \R,
$$
$$
\dist(\theta_t g, \Sigma_+) \leq C e^{-\eta_2 t},\quad t\in \R,
$$
and, moreover, there exist \textit{exponential uniform attractors} $\mathcal{M}_{\Sigma_-}$ and $\mathcal{M}_{\Sigma_+}$, then we have the semi-continuity property \ref{intentering} with $\mathcal{A}_- = \mathcal{M}_{\Sigma_-}$ and $\mathcal{A}_+ = \mathcal{M}_{\Sigma_+}$, $f(t) = K e^{- \xi t}$, for some $K\geq 1$ and $\xi > 0$, and Theorem \ref{intdenras} can be applied whenever $\mathcal{M}_{\Sigma_-}$ and $\mathcal{M}_{\Sigma_+}$ have finite box-counting dimension.

Then we have the problem of finding exponential uniform attractors for the two processes $\{U_\sigma\}_{\sigma\in \Sigma_-}$ and $\{U_\sigma\}_{\sigma\in \Sigma_+}$. This can be achieved supposing that $\Sigma_-$ and $\Sigma_+$ have finite box-counting dimension, which is the content of Theorem \ref{ralec}. This theorem is based on the work of Efendiev, Miranville and Zelik in \cite{efendiev}, where the authors estimate the Kolmogorov entropy of uniform attractors for a reaction-diffusion equation. Our result \ref{ralec} is abstract and focuses on the case of finite dimensional symbol spaces.

With all these preliminary results, we are able to prove our second main Theorem:

\begin{maintheorem}\label{inthuuuuglou}
Let $\{U_\sigma(t,s):t\geq s\}_{\sigma\in \Sigma}$ be a system of evolution processes in the metric space $X$, with $\Sigma = \mathcal{H}_\Xi(g)$ and $\{\theta_t: t\in \R\}$ a group of operators in $\mathcal{C}(\Xi)$ such that $(t,\sigma) \mapsto \theta_t \sigma$ is continuous. Suppose that the system satisfies \ref{intaux} - \ref{intholder}, and also
\begin{enumerate}[label=$(H_{\arabic*})$, start=4]
    \item \label{intlipdriving} The driving semigroup is Lipschitz for every $t\geq 0$. More precisely, there exist constants $P \geq 1$ and $\zeta > 0$ such that for all $t\geq 0$,
    $$
    \dist_\Xi(\theta_t \xi_1, \theta_t \xi_2) \leq P e^{\zeta t} \dist_\Xi(\xi_1, \xi_2), \quad \forall \xi_1, \, \xi_2\in \Xi.
    $$
    \item \label{intlipschitz} $\{U_{\sigma}(t, s)\}_{\sigma \in \Sigma}$ is $(\Sigma,X)$-Lipschitz on the absorbing set $\mathcal{B}$, satisfying
	\begin{equation*}\label{intlipschitzsymbol}
		\| U_{\sigma_{1}}(t,0) x - U_{\sigma_{2}}(t,0) x \|_X \leqslant L(t) d_\Xi (\sigma_{1} , \sigma_{2} ), \qquad \forall t \geq 1, \, \sigma_{1}, \sigma_{2} \in \Sigma, \, x \in \mathcal{B}, 
	\end{equation*}
	where $1\leq L(t) \leq c_1 e^{\beta t}$ for some positive constants $ c_1 ,\beta > 0$ and  for $t \geq 1$.
\end{enumerate}

Suppose also that there exist finite dimensional compact invariant sets $\Sigma_-$ and $\Sigma_+ \subset \Xi$ such that $\alpha(g) \subset \Sigma_-$, $\omega(g) \subset \Sigma_+$ and there exist constants $\eta_1 >0$, $\eta_2 >0$, $C\geq 1$ such that:
\begin{equation}\label{intblux}
\dist(\theta_t g, \Sigma_-) \leq C e^{\eta_1 t}
\end{equation}
\begin{equation}\label{intravael}
\dist(\theta_t g, \Sigma_+) \leq C e^{-\eta_2 t}
\end{equation}

Then the system $\{U_\sigma(t,s):t\geq s\}_{\sigma\in \Sigma}$ has a finite dimensional uniform attractor, satisfying the estimate:
\begin{equation}\label{intatroz}
  d_B(\mathcal{A}_\Sigma) \leq \frac{1}{\theta} + \frac{1}{\gamma} \cdot \log_2 N_{\frac{1}{4\kappa(\tau)}} + \left ( \frac{(\beta + \zeta) \tau}{\log 2} +1 \right ) \cdot \max(d_B(\Sigma_-), d_B(\Sigma_+)), 
\end{equation}
where $N_{\frac{1}{4\kappa(\tau)}}$ denotes the number of balls of radius $\frac{1}{4\kappa(\tau)}$ in $X$ necessary to cover the unit ball $B_Y(0,1)$ in $Y$.
\end{maintheorem}

\begin{sproof}
As mentioned before, since $\alpha(g) \subset \Sigma_-$, $\omega(g) \subset \Sigma_+$, it can be proven that
\begin{equation*}
\mathcal{A}_\Sigma \subset \mathcal{A}_{\Sigma_-} \cup \mathcal{A}_{\Sigma_+} \cup \bigcup_{t\in \R} A_{\theta_t g}(0) = \mathcal{A}_{\Sigma_-} \cup \mathcal{A}_{\Sigma_+} \cup \bigcup_{t\in \R} A_{g}(t)  
\end{equation*}
Since $\Sigma_-$ and $\Sigma_+$ have finite box-counting dimension, it follows from the results in \cite{cuietal} that $\mathcal{A}_{\Sigma_-}$ and $\mathcal{A}_{\Sigma_+}$ also have finite box-counting dimension. 

Theorem \ref{ralec} gives the existence of exponential uniform attractors $\mathcal{M}_{\Sigma_-}$ and $\mathcal{M}_{\Sigma_+}$, then Proposition \ref{barol} holds. This proposition implies condition \ref{intentering} with an exponential function $f$. Then we can apply Theorem \ref{intazim} to show that
$$
\bigcup_{t\in \R} A_g(t)
$$
has finite box-counting dimension, from which follows that the whole attractor $\mathcal{A}_\Sigma$ has finite box-counting dimension.

\end{sproof}

\begin{remark}
    Notice that if $t\mapsto \theta_t g$ is Lipschitz (or Hölder) continuous (which usually follows when $g$ is Lipschitz), then we can use Theorem 4.8 in \cite{cuietal} to show that $\Sigma$ has finite box-counting dimension, from which follows by Theorem 3.1 of this same article that $\mathcal{A}_\Sigma$ has finite box-counting dimension. The advantage in the theorem presented here is that $t\mapsto \theta_t g$ need not to be Lipschitz or Hölder, so that we do not assume anything on the box-counting dimension of $\Sigma$. The Hölder hypothesis is instead put on the evolution process \ref{intholder}. It is a clear improvement since we can show the Hölder continuity for the evolution processes without asking Hölder continuity for the application $t\mapsto \theta_t g$, and therefore we do not assume anything on the box-counting dimension of $\Sigma$ (only on the dimension of the asymptotic symbols $\Sigma_-$ and $\Sigma_+$).
\end{remark}
\bigskip

The main contribution of the main theorems \ref{intdenras} and \ref{inthuuuuglou} is the uniform estimate of the box-counting dimension of the complete pullback attractor for a given evolution process $\{U(t,s):t\geq s\}$, that is,
$$
\mathcal{A}_p = \bigcup_{t\in \R} A(t)
$$
is finite dimension, under the stated hypotheses. This implies the existence of a projection from $\mathcal{A}_p$ into a finite dimensional Hilbert space that is a bijection onto its image, and has a Hölder continuous inverse (see \cite{embed}). It was already known (see \cite{jung}) that if the standard smoothing hypothesis \ref{intsmoothing} is satisfied and the evolution process is Hölder continuous in time, then we have
$$
d_B \left (\bigcup_{t\in [-n, n]} A(t) \right ) < \infty
$$
for every $n\in \N$, and $\mathcal{A}_p$ has a bijection onto a subset of a finite dimensional Hilbert space. However, without estimating the box-counting dimension of the whole set $\mathcal{A}_p$, we could not guarantee that the inverse of this bijection was Hölder continuous.

Next corollary allows us to estimate the box-counting dimension of the sets
$$
\bigcup_{t\leq n} A(t), \quad n\in \N
$$
under weaker assumptions than in Main Theorem \ref{inthuuuuglou}.

\begin{corol}
    Let $\{U_\sigma(t,s):t\geq s\}_{\sigma\in \Sigma}$ be a system of evolution processes in the metric space $X$, with $\Sigma = \mathcal{H}_\Xi(g)$ and $\{\theta_t: t\in \R\}$ a group of operators in $\mathcal{C}(\Xi)$ such that $(t,\sigma) \mapsto \theta_t \sigma$ is continuous. Suppose that the system satisfies \ref{intaux} - \ref{intlipschitz} and that there exists a finite dimensional compact invariant set $\Sigma_- \subset \Xi$ such that $\alpha(g) \subset \Sigma_-$ and constants $\eta_1 >0$, $C\geq 1$ such that:
\begin{equation}
\dist(\theta_t g, \Sigma_-) \leq C e^{\eta_1 t}
\end{equation}

Then, for any $T\in \R$, we have:
$$
d_B\left ( \bigcup_{t\leq T} A_g(t)\right ) < \infty.
$$
\end{corol}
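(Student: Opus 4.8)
The plan is to reduce the one-sided union $\bigcup_{t \leq T} A_g(t)$ to the exponential case $(T_2)$ of Main Theorem \ref{intdenras}, using only the information available as $t \to -\infty$. First I would produce the exponential convergence toward a finite-dimensional asymptotic set. Since $\Sigma_-$ is finite dimensional, compact and invariant, Theorem \ref{ralec} supplies an exponential uniform attractor $\mathcal{M}_{\Sigma_-}$ of finite box-counting dimension. Combining this with the decay estimate $\dist(\theta_t g, \Sigma_-) \leq C e^{\eta_1 t}$ and hypotheses \ref{intlipdriving}--\ref{intlipschitz}, the $t \to -\infty$ half of Proposition \ref{barol} yields constants $K \geq 1$, $\xi > 0$ and $\tilde T \geq 0$ with
$$
\dist_H(A_g(t), \mathcal{M}_{\Sigma_-}) \leq K e^{-\xi |t|}, \qquad t \leq -\tilde T,
$$
so that condition \ref{intentering} holds in the direction $t \to -\infty$ with $\mathcal{A}_- = \mathcal{M}_{\Sigma_-}$ and $f(|t|) = K e^{-\xi |t|}$. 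No assumption on the behaviour of $g$ as $t \to +\infty$ is needed, precisely because the union is truncated at $T$.

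Next I would split the union at $-\tilde T$,
$$
\bigcup_{t \leq T} A_g(t) = \left( \bigcup_{t \leq -\tilde T} A_g(t) \right) \cup \left( \bigcup_{-\tilde T \leq t \leq T} A_g(t) \right),
$$
and use that the box-counting dimension of a finite union is the maximum of the dimensions of its members. The second piece is a union over the compact time interval $[-\tilde T, T]$; under the smoothing hypothesis \ref{intsmoothing} and the Hölder-in-time hypothesis \ref{intholder}, the local finiteness result of \cite{jung} gives $d_B\!\left( \bigcup_{-\tilde T \leq t \leq T} A_g(t) \right) < \infty$. Everything therefore reduces to the genuinely unbounded piece $\bigcup_{t \leq -\tilde T} A_g(t)$.

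For that piece I would run the covering argument of the exponential case of Main Theorem \ref{intdenras}, restricted to the direction $t \to -\infty$. Fixing a scale $r > 0$, the exponential bound above shows that for $|t| > M_r := \tfrac{1}{\xi}\log(2K/r)$ the slice $A_g(t)$ lies in the $r/2$-neighbourhood of $\mathcal{M}_{\Sigma_-}$, so the entire tail $\bigcup_{t \leq -M_r} A_g(t)$ is covered by at most $N_X(\mathcal{M}_{\Sigma_-}, r/2)$ balls of radius $r$, a number growing only polynomially in $1/r$ since $d_B(\mathcal{M}_{\Sigma_-}) < \infty$. The remaining slices lie over the window $[-M_r, -\tilde T]$, whose length is $O(\log(1/r))$; partitioning this window according to \ref{intholder} and compressing each resulting slice into finitely many balls by the smoothing estimate \ref{intsmoothing}, exactly as in the proof of $(T_2)$, produces a cover whose cardinality is at most a polynomial in $1/r$ times a logarithmic factor. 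Summing the two contributions keeps $N_X\!\left( \bigcup_{t \leq -\tilde T} A_g(t), r \right)$ polynomially bounded in $1/r$, whence $d_B\!\left( \bigcup_{t \leq -\tilde T} A_g(t) \right) < \infty$; the finite-union property then gives the claim.

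The main obstacle is controlling the length of the time window over which the fine, smoothing-based covering must be performed. Only the exponential convergence provided by Proposition \ref{barol} — itself resting on the finite dimensionality of $\Sigma_-$ through Theorem \ref{ralec} — guarantees that this window grows merely logarithmically in $1/r$, which is exactly what keeps the ball count polynomial and the box dimension finite. This is the precise role played by the hypothesis $\alpha(g) \subset \Sigma_-$ together with the exponential decay toward $\Sigma_-$; were this decay absent, the window could be unbounded and the estimate would break down, which is why the corollary still requires control of $g$ as $t \to -\infty$ even though it drops every assumption on the forward direction.
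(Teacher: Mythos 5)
Your proposal is correct and follows exactly the route the paper intends: the paper states this corollary without a separate proof, as the one-sided ($t\to-\infty$) adaptation of Main Theorem \ref{intdenras}, with the exponential semicontinuity toward $\mathcal{M}_{\Sigma_-}$ supplied by Theorem \ref{ralec} and the backward half of Proposition \ref{barol}, and the covering argument of case \ref{intT2} run only over the tail $t\leq -\tilde T$ plus a compact window. Your observation that the logarithmic growth of the covering window is precisely what the exponential decay buys is the right reading of why the hypothesis on $\alpha(g)$ and $\Sigma_-$ cannot be dropped.
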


\bigskip

Now we come back to problem \eqref{intazim}. Consider any function $f \in \mathcal{C}(\R, \R)$ and $g\in \mathcal{C}(\R, X)$ such that \eqref{intazim} has a unique solution $v$ for every initial values $\tau \in \R$, $v_\tau \in X$. Suppose further that $f$ and $g$ are regular enough so that we have a system of processes $\{U_\sigma(t,\tau): t\geq \tau\}_{\sigma\in \Sigma}$, where $\Sigma = \mathcal{H}_\Xi (g)$, $U_\sigma(t,\tau)v_\tau$ is the solution of the problem \eqref{intmarjax}.

With dissipativity assumptions on $f$ and $g$, we can guarantee that this system of processes has an uniform attractor $\A_\Sigma \subset X$ that is bounded in $L^\infty(\O)$ and in $H^1(\O)$.

We suppose that there exist \textit{quasi-periodic functions} $g_\pm \in \Xi$ such that:
\begin{equation}\label{intguarax}
\|g(t)-g_-(t)\| \leq Q_1 e^{\eta_1 t}, \quad \forall t\in \R.   
\end{equation}
\begin{equation}\label{intmutambo}
\|g(t)-g_+(t)\| \leq Q_2 e^{-\eta_2 t}, \quad \forall t\in \R,    
\end{equation}

Let $\Sigma_- = \mathcal{H}_\Xi(g_-)$, $\Sigma_+ = \mathcal{H}_\Xi(g_+)$, $X:=(L^{2}({\mathcal{O}}),\|\cdot\|)$, $Y:=H_{0}^{1}({\mathcal{O}})$. Using theorem \ref{ralec} we can prove that there exist finite-dimensional exponential uniform attractors $\mathcal{M}_{\Sigma_-}$ and $\mathcal{M}_{\Sigma_+}$ for the processes $\{U_\sigma(t,s):t\geq s\}_{\sigma\in \Sigma_-}$ and $\{U_\sigma(t,s):t\geq s\}_{\sigma\in \Sigma_+}$. Moreover, we can use Theorem \eqref{inthuuuuglou} to estimate the box-counting dimension of $\mathcal{A}_\Sigma$ as a depending function on the box-counting dimensions of $\Sigma_-$ and $\Sigma_+$, and this shows how our new results can be applied to a particular problem.

\bigskip

In Section \ref{prel} we will present the definitions and preliminary results necessary to study non-autonomous dynamical systems. Then, in Section \ref{exp}, we will show the necessary result about existence of an exponential uniform attractor $\mathcal{A}_\Sigma$ for the process $\{U_\sigma(t,s):t\geq s\}_{\sigma\in \Sigma}$ when $\Sigma$ is finite dimensional. As we said before, this is an abstract version of the results obtained in \cite{efendiev}. In Section \ref{main}, we will present the original results that culminate on Theorem \ref{inthuuuuglou}, showing how to estimate the box-counting dimension of $\mathcal{A}_\Sigma$ without assuming that $\Sigma$ has a finite box-counting dimension. Finally, in Section \ref{app}, we apply the previous results to study the reaction-diffusion equation \eqref{intazim}.

\bigskip

In particular, we will study a particular case of forcing $g:\R \to X$ in \eqref{intazim} such that $\Sigma = \mathcal{H}_\Xi(g)$ has infinite box-counting dimension. The function $g$ will be defined the following way. Let $A \subset X$ be any compact set with infinite box-counting dimension in $X$ — for example, the unit ball of $H^1_0(\mathcal{O})$ considered as a subspace of $L^2(\mathcal{O})$ is compact and has infinite topological dimension, hence infinite box-counting dimension. It is possible to construct a continuous surjective function from the Cantor set $\mathcal{C} \subset [0,1]$ onto $A$, that is $\alpha:\mathcal{C} \to X$ such that $\alpha(\mathcal{C}) = A$ (see \cite{immer}). We extend this function continuously to $g:\mathbb{R} \to X$ satisfying $g(t) = 0$ for $t\geq T$ or $t\leq -T$, where $T >1$ is a constant. In this case, the image of $g$ will be infinite dimensional, and the box-counting dimension of $\mathcal{H}_\Xi(g)$ will also be infinite. But defining $\Sigma_- = \Sigma_+ = \{\textbf{0}\}$, where $\textbf{0}:\R \to X$ is the zero function, and asking additional hypothesis on the function $f$, we can apply Theorem \ref{inthuuuuglou} and show that equation \eqref{intazim} has an uniform attractor with finite box-counting dimension. This highlights the advantages of our new theorems in relation to the existent results in the literature, which require finite-dimensionality of $\Sigma$.


\section{Definitions and Preliminary Results}\label{prel}

Let $\left(X,\|\cdot\|_{X}\right)$ be a Banach space and let $\{U(t,s):t,s\in\mathbb{R},\, t\geq s \}$ be a two-parameter set of continuous mappings from $X$ into itself. In the following we will give the basic definitions and results related to non-autonomous dynamical systems, including the notions of pullback and uniform attractors.

\begin{defin}
We say that $\{U(t,s): t\geq s\}$ is an \textbf{evolution process} in $X$ if it satisfies:
\begin{itemize}
    \item $U(s,s)=Id_X$, for all $s \in \R$;
    \item $U(t, s)U(s,\tau) = U(t,\tau), \text{ for all } t \geq s \geq \tau$;
    \item For each $x\in X$ the mapping $\{(t,s)\in \R^2: t\geq s\}\ni (t,s)\mapsto U(t,s)x\in X$ is continuous.
\end{itemize}

A particular case of evolution process is when $U(t+\tau,s+\tau) = U(t,s)$ for all $t\geq s$, $\tau \in \R$. In this case, the family $\{T(t):t\geq 0\}$ defined as $T(t) = U(t,0)$, $t\geq 0$ is called a \textbf{semigroup}. 

Equivalently, a family of mappings $\{T(t):t \in \R\}$, $T(t): X\to X$ is called a \textbf{semigroup} if it satisfies the following properties:
\begin{itemize}
    \item $T(0)=Id_X$;
    \item $T(t)T(s) = T(t+s), \text{ for all } t,  s\geq 0$;
    \item For each $x\in X$, the mapping $[0,\infty)\ni t\mapsto T(t)x\in X$ is continuous.
    
\end{itemize}

\end{defin}

The intuition about an evolution process is that we \textit{evolve} the phase space $X$ as follows: for each initial position $x\in X$ and initial time $s\in \R$, $U(t,s)x \in X$ denotes the (final) state at a later time $t$. If the evolution does not depend on the initial time $s\in \R$ (for example, if the vector field guiding the solution does not change with time), we can use a semigroup to model the dynamical system. In this case, for each initial position $x \in X$, and initial time $s\in \R$, we denote the final state (at a later time $t$) as $T(t-s)x$.

For an evolution process, we can define its \textit{pullback attractor}. To that end, we first recall the definition of Hausdorff semi-distance between sets $A\subset X$ and $B\subset X$ as $\dist_H(A,B) := \sup_{x\in A} \dist(x,B)$.

\begin{defin}
Let $\{U(t,s):t\geq s\}$ be an evolution process. The \textbf{pullback attractor} of this process (if it exists) is the family $\{A(t):t\in \R\}$ of compact subsets of $X$ such that:
\begin{itemize}
    \item $\bigcup_{t\in \R} A(t)$ is bounded;
    \item The family $\{A(t):t\in \R\}$ is \textbf{invariant} under the action of the evolution process $\{U(t,s):t\geq s\}$, that is, $U_g(t,s)A(s) = A(t)$ for all $t\geq s$; 
    \item For every $t\in \R$, the set $A(t)$ \textbf{pullback attracts} any bounded subset of $X$ at time $t$, that is, for every bounded set $B\subset X$ and $t \in \R$, we have $\dist_H(U(t,s)B, A(t)) \overset{s\to -\infty}{\longrightarrow} 0$. 
\end{itemize}
\end{defin}

The pullback attractor at time $t$ reflects the asymptotic behavior of solutions that began further and further in the past evolved to the present time $t$.

\begin{defin}
Let $\{T(t):t\geq 0\}$ be a semigroup. The \textbf{global attractor} of this semigroup (if it exists) is the non-empty, compact set $\mathcal{A}$, such that:
\begin{itemize}
    \item The set $\A$ is invariant by the semigroup, that is, $T(t)\A = \A$ for all $t\geq 0$; 
    \item For every bounded set $B\subset X$ we have $\dist_H(T(t)B, \A) \overset{t\to \infty}{\longrightarrow} 0$. 
\end{itemize}
\end{defin}

Now let us define the object that will help us to study non-autonomous dynamical systems like the non-autonomous partial differential equation \eqref{intmarjax}.

Let $\left(\Xi,d_{\Xi}\right)$ be a complete metric space and let $\{\theta_t:t\in \R\}$ be a group of continuous operators acting on  $\Xi$, that is, $\theta_0\sigma=\sigma$ and $\theta_{t}(\theta_{s}\sigma)=\theta_{t+s}\sigma$ for all $\sigma\in\Xi$,  $t$, $s\in\mathbb{R}$, and for each $s\in\mathbb{R}$, $\theta_s:\Xi \to \Xi$ is a continuous mapping. Let $\Sigma\subset\Xi$ be a compact subset of $\Xi$ which is invariant under the action of $\{\theta_{t}: t\in \R\}$, that is, $\theta_{t}\Sigma=\Sigma$ for all $t\in\mathbb{R}$. We recall the definitions of the alpha and omega limits of the point $g\in \Xi$:
$$
\alpha(g) = \left \{y = \lim_{n\to \infty} \theta_{t_n} g \text{ where } t_n \to -\infty \text{ and the limit $y$ exists}\right \}
$$
$$
\omega(g) = \left \{y = \lim_{n\to \infty} \theta_{t_n} g \text{ where } t_n \to \infty \text{ and the limit $y$ exists}\right \} .
$$

Let us consider now a collection of evolution processes $\left\{U_\sigma(t,s):t\geq s\right\}_{\sigma\in\Sigma}$, where each $\{U_\sigma(t,s):t\geq s\}$ is an evolution process in $X$

\begin{defin}
The collection $\left\{U_\sigma(t,s)\right\}_{\sigma\in\Sigma}$ is called a \textbf{system of evolution processes} if the translation property
\begin{equation}\label{julim}
U_{\theta_{h}\sigma}(t,s)=U_{\sigma}(t+h,s+h),\quad\forall\sigma\in\Sigma,\,t\geq s,\,h\in\mathbb{R}
\end{equation}
is satisfied. In this case, the parameter $\sigma$ is called the \textbf{symbol} of the process $\{U_{\sigma}(t,s):t\geq s\}$ and the set $\Sigma$ the \textbf{symbol space} of the system $\{ U_{\sigma}(t,s): t\geq s\}_{\sigma\in\Sigma}$.
\end{defin} 

If, for example, $\Xi = \mathcal{C}(\R, X)$ and $\sigma\in \Xi$, we can take $\theta$ as the shifting semigroup:
$$
\theta_{s}\sigma(\cdot)=\sigma(\cdot+s),\quad\forall s\in\mathbb{R},
$$
and we usually take $\Sigma$ to be the hull of a non-autonomous function $\sigma \in \mathcal{C}(\R, X)$, that is defined below:

\begin{defin}
Let $\Xi = \mathcal{C}(\R, X)$ be the space of continuous functions from $\R$ to $X$ endowed with the Fréchet metric $d_{\Xi}$:
$$
d_{\Xi}(\xi_{1},\xi_{2}):=\sum_{n=0}^{\infty}\frac{1}{2^{n}}\frac{d^{(n)}(\xi_{1},\xi_{2})}{1+d^{(n)}(\xi_{1},\xi_{2})},\qquad\xi_{1},\xi_{2}\in\Xi, 
$$
where
$$
d^{(n)}(\xi_{1},\xi_{2}):=\max_{s\in[-n,n]} \|\xi_1(s) - \xi_2(s)\|,\qquad n\in\mathbb{N}. 
$$

Notice that convergence in $(\Xi, d_\Xi)$ is equivalence to uniform convergence in compact sets of $\R$. 

We define the hull $\mathcal{H}_\Xi(g)$ of the non-autonomous function $g\in\Xi$ as
$$
\mathcal{H}_\Xi(g):={\overline{{\{\theta_r g:r\in\mathbb{R}\}}}}, 
$$
where $\theta_r g = g(\cdot + r)$ and the closure is taken under the metric $d_{\Xi}$ of $\Xi$.
\end{defin}

Therefore, if we want to study the Cauchy problem \eqref{intazim}, we can define $\Sigma = \mathcal{H}_\Xi(g)$ and the system of processes $\{ U_{\sigma}(t,s): t\geq s\}_{\sigma\in\Sigma}$ where $U_\sigma( \cdot,\tau) v_\tau : [\tau, \infty) \to X$ as the solution of problem \eqref{intmarjax} starting at $\tau$. If we have existence and uniqueness of solutions, then property \eqref{julim} follows readily.

\begin{defin}
For a system of processes $\left\{U_\sigma(t,s):t\geq s\right\}_{\sigma\in\Sigma}$, we can define its \textbf{uniform attractor} (if it exists) as the compact set $\mathcal{A}_\Sigma \subset X$ that is the minimal closed set with the property that for every bounded subset $B\subset X$, we have the uniform attraction property:
$$
\lim_{t\to \infty} \sup_{\sigma\in \Sigma} \dist_H(U_\sigma(t,0)B, \mathcal{A}_\Sigma) = 0
$$

We also say that a set $\B \subset X$ is a uniformly absorbing set for the system of processes $\left\{U_\sigma(t,s):t\geq s\right\}_{\sigma\in\Sigma}$ if for any $B \subset X$ bounded, there exists $t_B \geq 0$ such that:
$$
\{U_\sigma(t, 0) x: x\in B, t\geq t_B, \sigma \in \Sigma\} \subset \B.
$$
\end{defin}

Alternatively to the system of processes, we can study equation \eqref{intmarjax} with a semigroup:

\begin{defin}
    We say that a system of evolution processes $\{U_\sigma(t,s):t\geq s\}_{\sigma \in \Sigma}$ on the Banach space $X$ is $(\Sigma \times X, X)$-continuous if for every fixed $t$, $s \in \R$ with $t\geq s$, the map $(\sigma, u) \mapsto U_\sigma(t,s)u$ is continuous.
\end{defin}

\begin{defin}[Skew-product semigroup]
Let $\Sigma \subset \Xi$ be a symbol space and $\{U_\sigma(t,s):t\geq s\}_{\sigma \in \Sigma}$ be a $(\Sigma \times X, X)$-continuous system of evolution processes on the Banach space $X$. Let $\mathbb{X} = X\times \Xi$, we define the associated \textbf{skew-product} semigroup $\{S(t):t\geq 0\}$ by:
$$
S(t)(x, \sigma) = (U_\sigma(t,0)x, \theta_t \sigma),
$$
for all $t\geq 0$, $x\in X$, $\sigma \in \Sigma$.
\end{defin}

It has already been proved in literature (see \cite{booknolanovo}) that the uniform attractor of a system of evolution processes $\mathcal{A}_\Sigma$ can be identified with the projection on the first coordinate of the global attractor $\mathbb{A}$ of the associated skew-product semigroup (when these attractors exist).

We finish with a technical lemma that is not hard to prove.

\begin{lema}\label{Zungtown}
Let $\Sigma = \mathcal{H}_\Xi(g)$ for some $g\in \Xi$. If the application 
$(t, \sigma) \mapsto \theta_t \sigma$ is continuous, then: 
$$
\Sigma = \alpha(g) \cup \{\theta_t g : t\in \R\} \cup \omega(g).
$$
\end{lema}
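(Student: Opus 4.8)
The plan is to prove the two inclusions separately, with essentially all of the content lying in the inclusion $\Sigma \subseteq \alpha(g)\cup\{\theta_t g: t\in\R\}\cup\omega(g)$. The reverse inclusion is immediate from the definitions: the orbit $\{\theta_t g:t\in\R\}$ is contained in its closure $\Sigma=\mathcal{H}_\Xi(g)$ by definition of the hull, and every point of $\alpha(g)$ and of $\omega(g)$ is, by definition, a limit of points $\theta_{t_n}g$ lying on the orbit, hence belongs to $\overline{\{\theta_t g:t\in\R\}}=\Sigma$.

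For the forward inclusion I would fix an arbitrary $\sigma\in\Sigma$. Since $\Sigma$ is the closure of the orbit in the metric space $(\Xi,d_\Xi)$, there is a sequence $(t_n)\subset\R$ with $\theta_{t_n}g\to\sigma$. The key observation is that every real sequence admits a subsequence that converges in the extended line $[-\infty,+\infty]$; after relabelling, I may assume that exactly one of three mutually exclusive cases holds, namely $t_{n_k}\to t^*\in\R$, or $t_{n_k}\to+\infty$, or $t_{n_k}\to-\infty$. In each case the subsequence $\theta_{t_{n_k}}g$ still converges to $\sigma$, being a subsequence of a convergent sequence, so it only remains to identify the limit.

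In the finite case $t_{n_k}\to t^*\in\R$ I would invoke the hypothesis that $(t,\sigma)\mapsto\theta_t\sigma$ is continuous — in particular that $t\mapsto\theta_t g$ is continuous — to conclude $\theta_{t_{n_k}}g\to\theta_{t^*}g$; uniqueness of limits in the metric space $\Xi$ then forces $\sigma=\theta_{t^*}g\in\{\theta_t g:t\in\R\}$. In the case $t_{n_k}\to+\infty$ the convergence $\theta_{t_{n_k}}g\to\sigma$ is precisely the statement that $\sigma\in\omega(g)$, and symmetrically $t_{n_k}\to-\infty$ yields $\sigma\in\alpha(g)$. This exhausts all possibilities and establishes the claimed equality.

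I do not expect any genuine obstacle here: the only point at which a hypothesis is actually used is the finite case, where continuity of the driving action pins the limit down as a point of the orbit, while the decomposition itself rests on the compactness of $[-\infty,+\infty]$ and the Hausdorff property of the metric. The only care required is to make sure the trichotomy is exhaustive and that passing to the subsequence does not disturb the convergence $\theta_{t_n}g\to\sigma$.
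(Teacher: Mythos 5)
Your argument is correct and is exactly the standard proof the paper has in mind (the paper simply omits it, calling the lemma ``not hard to prove''): the reverse inclusion is definitional, and the forward inclusion follows from extracting a subsequence of $(t_n)$ convergent in $[-\infty,+\infty]$, using continuity of $t\mapsto\theta_t g$ together with uniqueness of limits in the metric space $(\Xi,d_\Xi)$ to handle the finite-limit case, and the definitions of $\alpha(g)$ and $\omega(g)$ for the two infinite cases. No gaps.
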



\section{Exponential Uniform Attractors}\label{exp}

Let $\{U_\sigma(t,s)\}_{\sigma \in \Sigma}$ be a system of processes in $X$ with a uniformly absorbing set $\mathcal{B}$ and suppose that the following conditions are satisfied:

\begin{enumerate}[label=$(H_{\arabic*})$]
    \item \label{aux} There is an auxiliary Banach space $Y$ compactly embedded in $X$.
     \item \label{smoothing}  $\{U_\sigma(t,s)\}_{\sigma \in \Sigma}$ is uniformly smoothing on the set $\mathcal{B}$, that is, for any $t>0$, there exists $\kappa(t)$ such that
     $$
    \underset{\sigma\in \Sigma}{\sup} \|U_\sigma(t,0)x - U_\sigma(t,0)y\|_Y \leq \kappa(t) \|x-y\|_X, \quad \forall \, x,y \in \mathcal{B}.
     $$

     \item \label{holder} For $s\in \R$, $x$, $y\in \mathcal{B}$, we have:
     $$
     \| U_\sigma(t+s,s)x - U_\sigma(\Tilde{t}+s,s)y \| \leq C(r) (\|x-y\|^\gamma + |t-\Tilde{t}|^\theta),
    $$ for $t$, $\Tilde{t} \in [0,r]$ where $C(r)$ is uniform for $s \in \R$, $\sigma \in \Sigma$.
\end{enumerate}

Next theorem is based on the work on the paper \cite{efendiev}. Our result is an abstract version of their result that can be applied to other parabolic differential equation. We find an exponential uniform attractor of finite box-counting dimension for the process $\{U_\sigma(t,s):t\geq s\}$. Notice that this kind of result had already been found for global attractors and pullback attractors (see \cite{bispace} and \cite{estefanizona}), and that in \cite{cuietal}, a finite dimensional uniform attractor is found, but the uniform exponential attractor is not constructed.

\begin{theorem}[Exponential Uniform Attractors] \label{ralec}
    Let $\{U_\sigma(t,s)\}_{\sigma \in \Sigma}$ be a system of processes in $X$ satisfying hypothesis \ref{aux}-\ref{holder} and

    \begin{enumerate}[label=$(H_{\arabic*})$, resume]
    \item \label{lipdriving} The driving semigroup is Lipschitz for every $t\geq 0$. More precisely, there exist constants $P \geq 1$ and $\zeta > 0$ such that for all $t\geq 0$,
    $$
    \dist_\Xi(\theta_t \xi_1, \theta_t \xi_2) \leq P e^{\zeta t} \dist_\Xi(\xi_1, \xi_2), \quad \forall \xi_1, \, \xi_2\in \Xi.
    $$
    \item \label{lipschitz} $\{U_{\sigma}(t, s)\}_{\sigma \in \Sigma}$ is $(\Sigma,X)$-Lipschitz on the absorbing set $\mathcal{B}$, satisfying
	\begin{equation}\label{lipschitzsymbol}
		\| U_{\sigma_{1}}(t,0) x - U_{\sigma_{2}}(t,0) x \|_X \leqslant L(t) d_\Xi (\sigma_{1} , \sigma_{2} ), \qquad \forall t \geq 1, \, \sigma_{1}, \sigma_{2} \in \Sigma, \, x \in \mathcal{B}, 
	\end{equation}
	where $1\leq L(t) \leq c_1 e^{\beta t}$ for some positive constants $ c_1 ,\beta > 0$ for $t \geq 1$.
    \item The symbol space $\Sigma \subset \Xi$ has finite box counting dimension $d_\Sigma$ over $\Xi$.
    \end{enumerate}

 Moreover, let $\tau$ be the time when $\mathcal{B}$ absorbs itself, that is:
    \begin{equation}\label{fatal}
    \bigcup_{\sigma\in \Sigma} U_\sigma(t,0)\mathcal{B} \subset \B , \quad \forall \, t\geq \tau.        
    \end{equation}

Then, for each $\nu\in (0,1)$, there exits an exponential attractor $\mathbb{M}_\nu$ for the skew-product semigroup $\{S(t):t\geq 0\}$, such that for any bounded set $\mathbb{D} \subset \mathbb{X}$, there exists a constant $C(\mathbb{D}) \geq 1$ such that:
$$
\dist_H(S(t)\mathbb{D}, \mathbb{M}_\nu) \leq C(\mathbb{D}) e^{- |\log(\nu)| \frac{\gamma}{\tau}t}, \quad \forall \, t\geq 0
$$
and this exponential attractor has bounded box-counting dimension, satisfying the estimate:
$$
d_B(\mathbb{M}_\nu) \leq   \frac{1}{\theta} + \frac{1}{\gamma} \left [ \frac{\log N_{\frac{\nu}{2\kappa(\tau)}}}{-\log \nu} + d_\Sigma \left ( \frac{(\beta + \zeta) \tau}{-\log \nu} +1 \right ) \right ],
$$
where $N_{\frac{\nu}{2\kappa(\tau)}}$ is the number of balls of radius $\frac{\nu}{2\kappa(\tau)}$ in $X$ necessary to cover the unit ball $B_Y(0,1)$ in $Y$.

The projected set $\mathcal{M}_\nu = \Pi_1 \mathbb{M}_\nu \subset X$ is an uniform exponential attractor for the system of processes $\{U_\sigma(t,s): t\geq s\}_{\sigma\in \Sigma}$, satisfying the same estimate for box-counting dimension:
\begin{equation}\label{lajym}
  d_B(\mathcal{M}_\nu) \leq   \frac{1}{\theta} + \frac{1}{\gamma} \left [ \frac{\log N_{\frac{\nu}{2\kappa(\tau)}}}{-\log \nu} + d_\Sigma \left ( \frac{(\beta + \zeta) \tau}{-\log \nu} +1 \right ) \right ].  
\end{equation}

Finally, if $D\subset X$ is bounded, there exists a constant $C(D)$ such that:
$$
\sup_{\sigma\in \Sigma} \dist_H(U_\sigma(t,s) D, \mathcal{M}_\nu) \leq C(D) e^{- |\log(\nu)| \frac{\gamma}{\tau}(t-s)}, \quad \forall \, t\geq s.
$$
    
\end{theorem}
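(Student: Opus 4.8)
The plan is to realize $\mathbb{M}_\nu$ as the projection of a continuous exponential attractor for the skew-product semigroup $\{S(t)\}$ on the product space $\mathbb{X}$, built from a discrete one. First I would fix the time-$\tau$ map $S:=S(\tau)$ and work on the set $\mathbb{B}:=\mathcal{B}\times\Sigma\subset\mathbb{X}$, which is positively invariant under $S$ by the self-absorption property \eqref{fatal} and becomes compact after one application of $S$ thanks to the smoothing \ref{smoothing} together with the compact embedding \ref{aux} and the compactness of $\Sigma$. This reduces the whole construction to producing a discrete exponential attractor for $S$ on $\mathbb{B}$ in the spirit of Eden--Foias--Nicolaenko--Temam and Efendiev--Miranville--Zelik.

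The heart of the argument is a single-step covering estimate. Writing $S(x_1,\sigma_1)-S(x_2,\sigma_2)=(U_{\sigma_1}(\tau,0)x_1-U_{\sigma_2}(\tau,0)x_2,\ \theta_\tau\sigma_1-\theta_\tau\sigma_2)$ and splitting the first coordinate as
\[
U_{\sigma_1}(\tau,0)x_1 - U_{\sigma_2}(\tau,0)x_2 = \big(U_{\sigma_1}(\tau,0)x_1 - U_{\sigma_1}(\tau,0)x_2\big) + \big(U_{\sigma_1}(\tau,0)x_2 - U_{\sigma_2}(\tau,0)x_2\big),
\]
the hypothesis \ref{smoothing} controls the first difference in the $Y$-norm by $\kappa(\tau)\|x_1-x_2\|_X$, \ref{lipschitz} controls the second by $L(\tau)d_\Xi(\sigma_1,\sigma_2)$, and \ref{lipdriving} controls the symbol coordinate by $Pe^{\zeta\tau}d_\Xi(\sigma_1,\sigma_2)$. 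I would then show that the $S$-image of any product ball of radius $R$ in $\mathbb{B}$ is covered by $N_*$ balls of radius $\nu R$, where the $X$-smoothing contributes the factor $N_{\frac{\nu}{2\kappa(\tau)}}$ (using \ref{aux} to cover a $Y$-ball of radius $\kappa(\tau)R$ by $X$-balls of radius $\tfrac{\nu R}{2}$) and the symbol direction contributes a factor of order $(Ce^{(\beta+\zeta)\tau}/\nu)^{d_\Sigma}$, the exponent $(\beta+\zeta)\tau$ arising from the combined expansion of $\theta_\tau$ (rate $e^{\zeta\tau}$) and of the symbol-to-state Lipschitz constant $L(\tau)\le c_1 e^{\beta\tau}$, and the power $d_\Sigma$ from the finite box-counting dimension of $\Sigma$. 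Iterating with geometric shrink factor $\nu$ yields a discrete exponential attractor $\mathbb{M}_d$ with
\[
d_B(\mathbb{M}_d) \le \frac{\log N_*}{-\log\nu} = \frac{\log N_{\frac{\nu}{2\kappa(\tau)}}}{-\log\nu} + d_\Sigma\Big(\frac{(\beta+\zeta)\tau}{-\log\nu}+1\Big),
\]
and exponential attraction of bounded sets governed by the per-step contraction $\nu$.

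To pass to the continuous attractor I would set $\mathbb{M}_\nu:=\bigcup_{t\in[0,\tau]}S(t)\mathbb{M}_d$, which is positively invariant and inherits exponential attraction. Its box-counting dimension is controlled by the standard Hölder-orbit lemma: since \ref{holder} makes $(t,x)\mapsto U_\sigma(t,0)x$ Hölder of exponent $\theta$ in time and $\gamma$ in the state, and \ref{lipschitz} gives Lipschitz dependence on $\sigma$, flowing $\mathbb{M}_d$ over the finite interval $[0,\tau]$ raises the dimension by at most $\tfrac1\theta$ and rescales the discrete bound by $\tfrac1\gamma$, producing exactly \eqref{lajym}. Projecting with the Lipschitz map $\Pi_1$ onto $X$ gives $\mathcal{M}_\nu=\Pi_1\mathbb{M}_\nu$ without increasing the dimension, and the identification relating the projection of the skew-product attractor to the uniform attractor (see \cite{booknolanovo}), together with the translation property \eqref{julim}, converts the attraction of $\{S(t)\}$ into the uniform estimate $\sup_{\sigma}\dist_H(U_\sigma(t,s)D,\mathcal{M}_\nu)\le C(D)e^{-|\log\nu|\frac\gamma\tau(t-s)}$; the factor $\gamma$ appears because attraction in $X$ is read off the smoothing step through the $\gamma$-Hölder continuity \ref{holder}.

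The main obstacle I expect is the single-step covering estimate in $\mathbb{X}$: one must simultaneously exploit the genuinely infinite-dimensional smoothing in $X$ (handled only through the compact embedding $Y\hookrightarrow X$ and the finiteness of $N_{\frac{\nu}{2\kappa(\tau)}}$) and the finite-dimensional but \emph{expanding} symbol dynamics, while controlling the cross-term in which a perturbation of the symbol feeds back into the state through $L(\tau)$. Choosing the symbol-covering scale so that this cross-term, the driving expansion $Pe^{\zeta\tau}$, and the $X$-contraction all fall below the common radius $\nu R$ is the delicate bookkeeping that forces the combined exponent $(\beta+\zeta)\tau$; the discrete-to-continuous and projection steps are comparatively routine once the correct abstract exponential-attractor machinery is in place.
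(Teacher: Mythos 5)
Your overall architecture --- lift to the skew-product on $\mathbb{B}=\mathcal{B}\times\Sigma$, build a discrete exponential attractor for $S(\tau)$ by iterated coverings that combine the smoothing \ref{smoothing} in the state direction with \ref{lipschitz}--\ref{lipdriving} in the symbol direction, thicken over $[0,\tau]$ using the H\"older continuity \ref{holder} to obtain the $\frac1\theta+\frac1\gamma(\cdots)$ bound, and project with $\Pi_1$ --- is exactly the paper's. The gap is in your single-step covering estimate. You assert that the $S$-image of a product ball of radius $R$ in $\mathbb{B}$ is covered by $N_*$ balls of radius $\nu R$, with the symbol direction contributing a \emph{fixed} factor of order $\bigl(Ce^{(\beta+\zeta)\tau}/\nu\bigr)^{d_\Sigma}$ per step. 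In the symbol coordinate this amounts to covering $\Sigma\cap B_\Xi(\sigma,R)$ by roughly $(\text{const})^{d_\Sigma}$ balls of radius comparable to $\nu R/e^{(\beta+\zeta)\tau}$, \emph{uniformly in the center $\sigma$ and the scale $R$}. Finite box-counting dimension only bounds the global covering number $N_\Xi(\Sigma,\epsilon)$; it gives no uniform control of the local quantities $N_\Xi\bigl(\Sigma\cap B_\Xi(\sigma,r),\epsilon r\bigr)$ over centers and scales. That uniform local covering property is a doubling (Assouad-dimension) condition, strictly stronger in general than $d_B(\Sigma)=d_\Sigma<\infty$, so iterating a per-ball factor $N_*$ is not justified under the stated hypotheses.

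The paper avoids this by never refining a local covering of $\Sigma$: at each stage $n$ it covers \emph{all} of $\Sigma$ from scratch by $N(n)=N_\Xi\bigl(\Sigma,\,R\nu^n/(Pe^{\zeta n\tau}L(n\tau))\bigr)$ balls, runs the induction \eqref{mutum} only in the state variable for each fixed symbol (yielding the factor $N_{\frac{\nu}{2\kappa(\tau)}}^{\,n}$ there), transfers between nearby symbols in one shot over $[0,n\tau]$ via \ref{lipschitz} with constant $L(n\tau)$, and multiplies the two counts at the end; the $\limsup$ defining $d_B(\Sigma)$ then produces exactly the term $d_\Sigma\bigl(\frac{(\beta+\zeta)\tau}{-\log\nu}+1\bigr)$ with all constants washed out. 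Your final numbers essentially coincide with the paper's because $N_\Xi(\Sigma,r_n)\approx\bigl(e^{(\beta+\zeta)\tau}/\nu\bigr)^{nd_\Sigma}$ asymptotically, but to make the argument valid you must reorganize the bookkeeping in this way (global covering of $\Sigma$ at scale $r_n$, local iteration only in $X$), or else add a doubling hypothesis on $\Sigma$. The remaining steps --- the construction of the discrete attractor from the nested covering centers, the time-thickening, the projection, and reading the uniform attraction of $\{U_\sigma\}$ off the skew-product attraction with the exponent $\gamma$ coming from \ref{holder} --- match the paper and are fine.
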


\begin{proof}

\textit{First part:}
Let $B$ be the closed bounded uniformly absorbing set of the system $\{U_\sigma(t,s):t\geq s\}_{\sigma\in \Sigma}$ and consider the lifted set $\mathbb{B} = B\times \Sigma$.

Then it is easy to see that if $\{S(t): t\in \R\}$ is the skew-product semigroup generated by the system $\{U_\sigma(t,s)\}_{\sigma\in \Sigma}$, then $S(\tau) \mathbb{B} \subset \mathbb{B}$.

For all $n\in \N$, consider the covering of the finite dimensional set $\Sigma$ with a minimum number of open balls of $\Xi$ of centers $\{\Sigma_-^n, \dots, \sigma_N^n\}$ and radius $R_n = R\nu^n/P e^{\zeta n \tau} L(n\tau)$. We name these balls $B_i^n = B_\Xi(\sigma_i^n, R_n)$. The number $N = N_n$ depends on the box counting dimension of $\Sigma$. More precisely, for every $\delta > 0$, there exists $n_0 = n_0(\delta)$ such that
\begin{equation}
    N_n \leq \left ( \frac{Pe^{\zeta n \tau} L(n \tau)}{R \nu^n} \right )^{d+\delta}, \quad \forall \, n\geq n_0
\end{equation}

    Let $\nu \in (0,1)$ and $R> 0$, $x_0 \in \B$ be such that $\mathcal{B} = B_X(x_0, R) \cap \mathcal{B}$. 

    Let $\kappa = \kappa(\tau)$. Since $Y$ is compactly embedded in $X$, the unit ball $B_Y(0,1)$ in $Y$ can be covered by $N_{\frac{\nu}{2\kappa}}$ balls of radius $\frac{\nu}{2\kappa}$ in $X$, that is:
    \begin{equation}\label{gabrael}
B_Y(0,1) \subset \bigcup_{i=1}^{N_{\frac{\nu}{2\kappa}}} B_X\left (x_i, \frac{\nu}{2\kappa} \right ), \quad x_i \in B_Y(0,1).
    \end{equation}

Now, let $\sigma \in \Sigma$ be arbitrary, it follows from \ref{smoothing} that:
\begin{equation}\label{buchin}
\begin{split}
    U_\sigma(\tau,0) \B & = U_\sigma(\tau,0) [B_X(x_0, R) \cap \mathcal{B}] \\
    & \subset B_Y(U_\sigma(\tau,0) x_0, \kappa R) \cap U_\sigma(\tau, 0) \B
\end{split}
\end{equation}

Let $y_\sigma = U_\sigma(\tau,0)x_0$. Then it follows from \eqref{fatal}, \eqref{gabrael} and \eqref{buchin} that:
\begin{equation}
    U_\sigma(\tau,0) \B \subset \bigcup_{i=1}^{N_{\frac{\nu}{2\kappa}}} B_X\left ( y_\sigma + \kappa R x_i,   \frac{R \nu}{2}\right ) \cap \B,     
\end{equation}  
so there are $q^\sigma_i \in \B$, $i = 1, \cdots, N_{\frac{\nu}{2\kappa}}$ such that:
\begin{equation}
    U_\sigma(\tau,0) \B \subset \bigcup_{i=1}^{N_{\frac{\nu}{2\kappa}}} B_X\left ( q^\sigma_i, R\nu  \right ) \cap \B,     
\end{equation}  

Suppose by induction that for some $k \geq 1$, we have $N_X(U_\sigma(k \tau,0) \B, R\nu^k) \leq N_{\frac{\nu}{2\kappa}}^k$ (we just proved that this is true for $k = 1$). Then, there exist $q_i^\sigma \subset \B$, $i = 1, \cdots, N_{\frac{\nu}{2\kappa}}^k$ such that:
\begin{equation}\label{fagundes}
U_\sigma(k \tau,0) \B \subset \bigcup_{i=1}^{N_{\frac{\nu}{2\kappa}}^k} B_X\left ( q^\sigma_i, R\nu^k  \right ) \cap \B,     
\end{equation}

Now, by the system of processes property:
\begin{equation*}
\begin{split}
U_\sigma((k+1) \tau,0) \B & = U_\sigma((k+1) \tau, k \tau) U_\sigma(k \tau, 0) \B \\
& \subset U_{\theta_{k \tau}\sigma}(\tau,0) U_\sigma(k \tau, 0) \B
\end{split}
\end{equation*}

Then, by \eqref{fagundes}, 
\begin{equation}\label{bazax}
U_\sigma((k+1) \tau,0) \B \subset \bigcup_{i=1}^{N_{\frac{\nu}{2\kappa}}^k} U_{\theta_{k \tau}\sigma}(\tau,0) [B_X\left ( q^\sigma_i, R\nu^k  \right ) \cap \B]    
\end{equation}
and for each $i$, we have by the smoothing property \ref{smoothing} and self absorption of $\B$ that:
\begin{equation}\label{matrip}
\begin{split}
U_{\theta_{k \tau}\sigma}(\tau,0) [B_X\left ( q^\sigma_i, R\nu^k  \right ) \cap \B] & \subset B_Y(U_{\theta_{k \tau}\sigma}(\tau,0) q^\sigma_i, \kappa R \nu^k) \cap \B \\
& \subset \bigcup_{j=1}^{N_{\frac{\nu}{2\kappa}}} B_X\left ( p_{i,j}, R\nu^{k+1}  \right ) \cap \B,     
\end{split}
\end{equation}
for $p_{i, j} \in \B$. Then, using \eqref{bazax} and \eqref{matrip}, we get $N_X(U_\sigma((k+1) \tau,0) \B, R\nu^{k+1}) \leq N_{\frac{\nu}{2\kappa}}^{k+1}$. 

Then, we proved by induction that for any $\sigma \in \Sigma$, 
\begin{equation}\label{mutum}
N_X(U_\sigma(n \tau,0) \B, R\nu^n) \leq N_{\frac{\nu}{2\kappa}}^n, \quad \forall \, n\in \N.
\end{equation}

For each $\sigma \in \Sigma$, chose a set of centers $W^n(\sigma) \subset X$ such that $\# W^n(\sigma) \leq N^n_{\frac{\nu}{2\kappa}}$ and:
\begin{equation}
    U_\sigma(n\tau, 0) \mathcal{B} \subset \bigcup_{u\in W^n(\sigma)} B_X(u, R\nu^n) \cap \mathcal{B}
\end{equation}

Now, for each $n\in \N$ and $j\in \{1, \dots N(n)\}$, we define:
$$
\mathcal{U}^j(n) = W^n(\sigma_j^n),
$$
$$
\mathcal{U}(n) = \bigcup_{j=1}^{N(n)} \mathcal{U}^j(n).
$$

\textit{Second part:}

It follows from the definition of skew-product semigroup that if $\Pi_1 : X\times \Xi \to X$ is the projection onto the first coordinate, then: 
$$
\Pi_1 S(n\tau) \mathbb{B} = U_\Sigma(n\tau, 0) B
$$

Next we will show that:
\begin{equation}\label{gropa}
\dist_H\left ( \Pi_1 S(n\tau) \mathbb{B}, \mathcal{U}(n) \right ) \leq 2R \nu^n.
\end{equation}
Indeed, if $y\in \Pi_1 S(n\tau) \mathbb{B}$, then $y = U_\sigma(n\tau,0)x$, for some $\sigma \in \Sigma$, $x\in B$.

By the covering of $\Sigma$, there exists $i\in \{1, \dots, N(n)\}$ such that
$$
d_\Xi(\sigma, \sigma_i^n) \leq \frac{R \nu^n}{L(n \tau)}.
$$
And by the definition of the sets $\mathcal{U}^i(n)$, we have:
\begin{equation}\label{abrop1}
    d\left ( U_{\sigma_i^n}(n\tau,0) x, \mathcal{U}^i(n) \right ) \leq R\nu^n.
\end{equation}
Moreover, by \ref{lipschitz}, we have:
\begin{equation}\label{abrop2}
    \|y - U_{\sigma_i^n}(n\tau,0) x\| \leq L(n\tau) \frac{R \nu^n}{L(n\tau)} = R\nu^n
\end{equation}
And the estimates \eqref{abrop1} and \eqref{abrop2} imply \eqref{gropa}. Notice that, more precisely, we showed that:
\begin{equation}\label{braaaaax}
\dist_H\left ( U_{B_i^n}(n\tau, 0)B, \mathcal{U}^i(n) \right ) \leq 2R \nu^n
\end{equation}

Now, for each $v\in \mathcal{U}^i(n)$ take, if existing, $(u_v, \xi_v) \in \mathbb{B}$ such that $\xi_v\in B_i^n$ and 
\begin{equation}
    \|v - U_{\xi_v}(n\tau, 0) u_v\| \leq 2R\nu^n.
\end{equation}
If there is no such pair $(u_v, \xi_v)$, then we can take off the element $v$ from $\mathcal{U}^i(n)$ still preserving the estimate \eqref{braaaaax}.

Now we define $\mathbb{U}^i(n) = \{S(n\tau) (u_v, \xi_v) : v\in \mathcal{U}^i(n)\}$, and
$$
\mathbb{U}(n) = \bigcup_{i=1}^{N(n)}\mathbb{U}^i(n).
$$
Now, using \eqref{gropa} and the definition of $\mathbb{U}(n)$, it is not hard to see that:
$$
\dist_H \left ( \Pi_1 S(n\tau) \mathbb{B}, \Pi_1 \mathbb{U}(n) \right ) \leq 4R\nu^n.
$$

We also notice that $\mathbb{U}(n) \subset S(n\tau) \mathbb{B}$ and 
$$
\# \mathbb{U}(n) \leq \# \mathcal{U}(n) \leq N(n) N^n_{\frac{\nu}{2\kappa}}.
$$

\textit{Third part:}

We define now the sets:
$$
\mathbb{E}(0) = \mathbb{U}(0), \quad \mathbb{E}(k+1) = S(\tau) \mathbb{E}(k) \cup \mathbb{U}(k+1)
$$

The set $\mathbb{M}^d : = \overline{\bigcup_{k\in \mathbb{N}} \mathbb{E}(k)}$ will be the exponential attractor for the discrete semigroup $\{S(n\tau): n\in \N\}$.

First, notice that $S(\tau) \mathbb{M}^d \subset \mathbb{M}^d$. Now if $n\in \N$, we have:
$$
\dist_H(S(n\tau) \mathbb{B}, \mathbb{M}^d) \leq  \underset{(x,\sigma) \in \mathbb{B}}{\sup} \dist[(U_\sigma(n\tau, 0) x, \theta_{n\tau} \sigma), \mathbb{U}(n)]
$$

For  each $(x,\sigma) \in \mathbb{B}$, there exists a $i\in \{1, \dots, N(n)\}$ such that $\sigma \in B_i^n$, and $v\in \mathcal{U}^i(n)$ such that
$$
\|U_\sigma(n\tau,0) x - v\| \leq 2R\nu^n,
$$
and there exist $(u_v, \xi_v)$ such that $\xi_v \in B^n_i$, and 
$$
\|v - \Pi_1 S(n\tau) (u_v, \xi_v)\| \leq 2R\nu^n
$$

And we have $u_n = S(n\tau) (u_v, \xi_v) \in \mathbb{U}(n)$ and:
\begin{equation}
\begin{split}
\dist[(U_\sigma(n\tau, 0) x, \theta_{n\tau} \sigma), u_n] & \leq \|U_\sigma(n\tau,0) x - \Pi_1 S(n\tau) (u_v, \xi_v)\| + \dist_\Xi(\theta_{n\tau}\sigma, \theta_{n\tau} \xi_v)  \\
& 4R\nu^n + Pe^{\zeta n\tau} 2 \frac{R\nu^n}{Pe^{\zeta n \tau} L(n\tau)} \leq 6R\nu^n
\end{split}
\end{equation}
which implies that 
\begin{equation}\label{agron}
    \dist_H(S(n\tau) \mathbb{B}, \mathbb{M}^d) \leq 6R\nu^n
\end{equation}
which gives the exponential attraction of $\mathbb{B}$ by $\mathbb{M}^d$ under the action of $\{S(n\tau): n\in \N\}$. Since $\mathbb{B}$ is an absorbing set for the semigroup, the exponential attraction extends to every bounded set $\mathbb{D} \subset \mathbb{X}$.

\textit{Fourth part:}
Now let us prove that $\mathbb{M}^d$ has finite box-counting dimension. Since $\mathbb{U}(n) \subset S(n\tau)\mathbb{B}$, it can be shown by induction that $\mathbb{E}((n+j)\tau) \subset S((n+j)\tau) \mathbb{B} \subset S(n\tau) \mathbb{B}$, which implies:
\begin{equation}
\mathbb{M}^d \subset \bigcup_{j=1}^{n-1} \mathbb{E}(n) \cup \overline{S(n\tau) \mathbb{B}}
\end{equation}
It follows from the estimate on the cardinality of the sets $\mathbb{U}(n)$, $n\in \N$ and the construction of the sets $\mathbb{E}(n)$, $n\in \N$, that:
$$
\# \mathbb{E}_0 \cup \cdots \cup \mathbb{E}_{n-1} \leq \frac{n(n+1)}{2} N(n) N^n_{\frac{\nu}{2\kappa}}
$$

It has been proved before that
$$
\dist_H(S(n\tau) \mathbb{B}, \mathbb{U}(n)) \leq 6 R\nu^n
$$
Which implies that:
$$
N(S(n\tau) \mathbb{B}, 7R\nu^n) \leq  \# \mathbb{U}(n) \leq N(n) N^n_{\frac{\nu}{2\kappa}}
$$

Then, we have:
$$
N(\mathbb{M}^d, 7R\nu^n) \leq \frac{n^2+n+2}{2} N(n) N^n_{\frac{\nu}{2\kappa}}
$$
Which implies that 
\begin{equation}
\begin{split}
d_B(\mathbb{M}^d) \leq \underset{n\to \infty}{\limsup} \frac{\log \left ( \frac{n^2+n+2}{2} N(n) N^n_{\frac{\nu}{2\kappa}} \right )}{- \log (7R\nu^n)}.
\end{split}
\end{equation}
\end{proof}
To estimate this, we notice that
$$
\underset{n\to \infty}{\lim} \frac{\log \frac{n^2+n+2}{2}}{-\log(7R\nu^n)} = 0
$$
$$
\underset{n\to \infty}{\lim} \frac{\log N^n_{\frac{\nu}{2\kappa}}}{-\log(7R\nu^n)} = \frac{\log N_{\frac{\nu}{2\kappa}}}{-\log \nu}
$$

Finally, by definition of $N(n)$, we have:
\begin{equation*}
\underset{n\to \infty}{\limsup} \frac{\log N(n)}{-\log(7R\nu^n)} \leq \underset{n\to \infty}{\limsup} \frac{\log N_\Xi\left ( \Sigma, \frac{R}{Pc_1} \left ( \frac{\nu}{e^{(\zeta + \beta) \tau}} \right )^n\right )}{-\log (7R \nu^n)} \leq d_\Sigma \left ( \frac{(\beta + \zeta) \tau}{-\log \nu} +1 \right ) 
\end{equation*}
where we used the box-counting dimension $d_\Sigma$ of $\Sigma$ in $\Xi$.

Then, we conclude that:
$$
d_B(\mathbb{M}^d) \leq \frac{\log N_{\frac{\nu}{2\kappa}}}{-\log \nu} + d_\Sigma \left ( \frac{(\beta + \zeta) \tau}{-\log \nu} +1 \right ).
$$

\textit{Fifth part:} Now, we will extend the exponential attractor $\mathbb{M}^d$ to an exponential attractor of the continuous semigroup $\{S(t): t\geq 0\}$. We define:
$$
\mathbb{M} = \bigcup_{t\in [0,\tau]} S(t) \mathbb{M}^d.
$$

The positive semi-invariance of $\mathbb{M}$ by $\{S(t):t\in \R\}$ follows from the positive semi-invariance of $\mathbb{M}^d$ by $\{S(n\tau): n\in \N\}$.

For the exponential attraction, let $t\geq 0$ and we write $t = n\tau + s$ with $s\in [0,\tau)$. Using \ref{holder}, \ref{lipdriving} and \ref{lipschitz} we can show that $S(t)$ is $\gamma$-Hölder continuous in $\mathcal{B} \times \Sigma$ with a constant we call $K(t)$, and $K(t)$ can be took uniform $(=K)$ for $t\in [0,\tau]$. Then we have:
\begin{equation}
\begin{split}
\dist_H(S(t)\mathbb{B}, \mathbb{M}) & \leq \dist_H(S(s) S(n\tau) \mathbb{B}, S(s) \mathbb{M}^d) \leq K (\dist_H(S(n\tau) \mathbb{B}, \mathbb{M}^d))^\gamma \\
& \leq K (6R\nu^n)^\gamma \leq C e^{\log(\nu)\gamma \frac{t}{\tau}} 
\end{split}
\end{equation}

This implies that $\mathbb{M}$ attracts any bounded set in $\mathbb{X}$ exponentially under the action of $\{S(t):t\in \mathbb{R}\}$.

Finally, we estimate the box-counting dimension of $\mathbb{M}$. Notice that:
$$
\mathbb{M} = \Phi([0,\tau] \times \mathbb{M}^d),
$$
where $\Phi: [0,\tau] \times \mathbb{M}^d \to \mathbb{M}$ is given by $\Phi(t,(x,\sigma)) = S(t) (x,\sigma)$. Since $\Phi$ is $\theta$-Hölder continuous in time and $\gamma$-Hölder continuous in the $\mathbb{X}$ variable, it follows that:
$$
d_B(\mathbb{M}) \leq \frac{1}{\theta} + \frac{1}{\gamma} d_B(\mathbb{M}^d) \leq   \frac{1}{\theta} + \frac{1}{\gamma} \left [ \frac{\log N_{\frac{\nu}{2\kappa}}}{-\log \nu} + d_\Sigma \left ( \frac{(\beta + \zeta) \tau}{-\log \nu} +1 \right ) \right ].
$$


\section{Main Results}\label{main}

Finally, we prove Theorem \ref{intdenras}, which estimates the box-counting dimension of the union of pullback attractors using the semi-continuity of these attractors as $t\to -\infty$ and $t\to \infty$.

\begin{proof}[Proof of  Main Theorem \ref{intdenras}]

By the first part of the proof of Theorem \ref{ralec}, for any $\sigma \in \Sigma$, 
\begin{equation}\label{mutam}
N_X(U_\sigma(n \tau,0) \B, R\nu^n) \leq N_{\frac{\nu}{2\kappa}}^n, \quad \forall \, n\in \N.
\end{equation}

If $\sigma\in \Sigma$, using the fact that $D = \bigcup_{t\in \R}A(t)$ is bounded, it is not hard to prove that $D\subset \B$. For $i\in \Z$, we have:
\begin{equation}
\begin{split}
A(i  \tau) & = U_\psi(i  \tau, (i-n)\tau) A((i-n)\tau) \\
& \subset U_{\theta_{(i-n)\tau}\psi}(n\tau, 0) \B
\end{split}
\end{equation}

Therefore, by \eqref{mutam}, we have:
\begin{equation}\label{ukrux}
    N(A(i\tau), R\nu^n) \leq N_{\frac{\nu}{2\kappa}}^n, \text{ for all } n\in \N, i\in \Z.
\end{equation}

Let $C = C(\tau)$ from \ref{holder} and $\Bar{R}= C R^{\gamma}$, and let $n \in \N$ be large enough so that there exists $T_n \geq \Tilde{T}$ such that $f(T_n) = \Bar{R}\nu^n$. By \ref{intentering} and the fact that $f$ is strictly decreasing, we get:
$$
\bigcup_{t\geq T_n} A(t) \subset B_X(A_+, \Bar{R}\nu^n)
$$ and $$
\bigcup_{t\leq - T_n} A(t) \subset B_X(A_-, \Bar{R}\nu^n)
$$

This implies that 
$$
N_X\left (\bigcup_{t\geq T_n} A(t), 2\Bar{R}\nu^n\right ) \leq N_X(A_+, \Bar{R}\nu^n)
$$ and $$
N_X\left (\bigcup_{t\leq - T_n} A(t), 2\Bar{R}\nu^n\right ) \leq N_X(A_-, \Bar{R}\nu^n).
$$

Now let us estimate the number of balls of radius $2\Bar{R}\nu^n$ we need to cover $\bigcup_{t\in [- T_n,T_n]} A(t)$. First notice that if $\lfloor r \rfloor$ and $\lceil r \rceil$ denote the nearest integers to $r$ from below and above, respectively, we have:
$$
\bigcup_{t\in [- T_n ,T_n]} A(t) \subset \bigcup_{i= - \lceil T_n/\tau \rceil }^{\lfloor T_n/\tau \rfloor}  \bigcup_{s\in [i, i+1]}A(s \tau)
$$
Now we define $\Phi_i:[0,1]\times A(i\tau)$ by:
$$
\Phi_i(t,x) = U_\psi((t+i)\tau,i\tau)x.
$$
Then, since $U_\psi(t,s)A(s) = A(t)$ for all $t\geq s$, we conclude:
$$
\bigcup_{s\in [i, i+1]}A(s \tau) = \Phi_i\left ( [0,1] \times A(i \tau) \right ).
$$
Now we cover $[0,1]$ with $N_1 = \big \lfloor \frac{2}{\left ( \frac{\Bar{R}\nu^n}{C\tau^\theta}\right )^{1/ \theta}} \big \rfloor$ balls of radius $\left ( \frac{\Bar{R}\nu^n}{C\tau^\theta}\right )^{1/ \theta}$ centered at $t_j\in [0,1]$, $j = 1, \cdots, N_1$, and we also cover $A(i\tau)$ using \eqref{ukrux}:
$$
A(i\tau) = \bigcup_{k=1}^{N_{\frac{\nu^{1/\gamma}}{2\kappa}}^n} B(x_k, R\nu^{n/\gamma}) \cap A(i\tau)
$$
with $x_k \in A(i\tau)$.

Then:
\begin{equation*}
\begin{split}
\bigcup_{s\in [i, i+1]}A(s \tau) & \subset \Phi_i\left ( \bigcup_{j=1}^{N_1} B_\R \left (t_j, \left ( \frac{\Bar{R}\nu^n}{C\tau^\theta}\right )^{1/ \theta} \right ) \cap [0,1] \times \bigcup_{k=1}^{N_{\frac{\nu^{1/\gamma}}{2\kappa}}^n} B(x_k, R\nu^{n/\gamma})\cap A(i\tau) \right )\\
& \subset \bigcup_{j=1}^{N_1} \bigcup_{k=1}^{N_{\frac{\nu^{1/\gamma}}{2\kappa}}^n} \Phi_i \left ( B_\R\left (t_j, \left ( \frac{\Bar{R}\nu^{n}}{C\tau^\theta}\right )^{1/ \theta} \right ) \cap [0,1] \times B(x_k, R\nu^{n/\gamma})\cap A(i\tau) \right ) \\
& \subset \bigcup_{j=1}^{N_1} \bigcup_{k=1}^{N_{\frac{\nu^{1/\gamma}}{2\kappa}}^n} B_X(U_\psi(t_j+i)\tau, i\tau)x_k, 2\Bar{R} \nu^n)
\end{split}
\end{equation*}

Therefore, we can conclude that:
\begin{equation*}
\begin{split}
N_X\left (\bigcup_{t\in \R} A(t), 2\Bar{R}\nu^n \right) & \leq N_X(A_+,\Bar{R}\nu^n) + N_X(A_-, \Bar{R}\nu^n) + \left ( 2 \frac{T_n}{\tau} + 3 \right ) \cdot N_1 \cdot N_{\frac{\nu^{1/\gamma}}{2\kappa}}^n \\
& \leq 3 \cdot \max \left ( N_X(A_+,\Bar{R}\nu^n), N_X(A_-, \Bar{R}\nu^n), \left ( 2 \frac{T_n}{\tau} + 3 \right ) \cdot N_1 \cdot N_{\frac{\nu^{1/\gamma}}{2\kappa}}^n \right )
\end{split}
\end{equation*}

Therefore, 
\begin{equation*}
\begin{split}
d_B\left ( \bigcup_{t\in \R} A(t) \right ) & = \limsup_{n\to \infty} \frac{\log N_X\left ( \bigcup_{t\in \R} A(t), 2\Bar{R}\nu^n \right )}{- \log 2\Bar{R}\nu^n} \leq  \max \left (L_1,  L_2, L_3 \right )
\end{split}
\end{equation*}
where $$
L_1 = \limsup_{n\to \infty} \frac{\log N_X(A_+,\Bar{R}\nu^n)}{- \log 2\Bar{R}\nu^n} = d_+
$$
$$ L_2 = \limsup_{n\to \infty} \frac{\log N_X(A_-, \Bar{R}\nu^n)}{- \log 2\Bar{R}\nu^n} = d_-$$
$$L_3 = \limsup_{n\to \infty} \frac{\log \left [ \left ( 2 \frac{T_n}{\tau} + 3 \right ) \cdot N_1 \cdot N_{\frac{\nu^{1/\gamma}}{2\kappa}}^n \right ]}{- \log 2\Bar{R}\nu^n}$$

If $f(t) = K / t^r$, for some $K \geq 1$ and $r\in (0,\infty)$, we have $T_n = \left (\frac{K}{\Bar{R}\nu^n} \right )^{\frac{1}{r}}$, and we conclude that:
\begin{equation*}
\begin{split}
L_3 & \leq \limsup_{n\to \infty} \frac{\log \left ( 2 \frac{T_n}{\tau} + 3 \right )}{- \log 2\Bar{R}\nu^n} + \limsup_{n\to \infty} \frac{\log \left [ \frac{2}{\left ( \frac{\Bar{R}\nu^n}{C\tau^\theta}\right )^{1/ \theta}} \right ]}{- \log 2\Bar{R}\nu^n} + \limsup_{n\to \infty} \frac{\log N_{\frac{\nu^{1/\gamma}}{2\kappa}}^n}{- \log 2\Bar{R}\nu^n} \\
& = \frac{1}{r} + \frac{1}{\theta} + \frac{\log N_{\frac{\nu^{1/\gamma}}{2\kappa}}}{-\log \nu}
\end{split}    
\end{equation*}

Taking $\nu = (1/2)^\gamma$, we get:
$$
L_3 \leq \frac{1}{r} + \frac{1}{\theta} + \frac{1}{\gamma}\cdot \log_2 N_{\frac{1}{4\kappa}}
$$

This proves \ref{intT1}, and \ref{intT2} follows from \ref{intT1}. 
\end{proof}

Next proposition gives sufficient conditions for semi-continuity of pullback attractors \eqref{zalux} and \eqref{zalux2}. The conditions are exponential convergence of symbols by translation, and the existence of exponential uniform attractors for the symbols in $\Sigma_-$ and $\Sigma_+$.

\begin{prop}\label{barol}
Let $\{U_\sigma(t,s):t\geq s\}_{\sigma\in \Sigma}$ be a system of evolution processes in the metric space $X$, for $\Sigma = \mathcal{H}_\Xi(g)$, and suppose that $\{U_g(t,s): t\geq s\}$ has a compact pullback attractor $D = \bigcup_{t\in \R}A(t)$. Suppose that the system satisfies \ref{lipschitz}, and that there are compact invariant sets $\Sigma_-$ and $\Sigma_+\subset \Xi$ that satisfy the following attraction properties:
$$
\dist(\theta(t) g, \Sigma_-) \leq C e^{\eta_1 t},
$$
$$
\dist(\theta(t) g, \Sigma_+) \leq C e^{-\eta_2 t}.
$$

Moreover, suppose that there exist uniform exponential attractors $\mathcal{M}_{\Sigma_-}$ and $\mathcal{M}_{\Sigma_+}$ such that there exist constants $\nu_2 > 0$ and $\nu_1 > 0$ and $C(D) \geq 1$ such that:
$$
\sup_{\sigma\in \Sigma_-} \dist_H(U_\sigma(t,0) D, \mathcal{M}_{\Sigma_-}) \leq C(D) e^{-\nu_1 t}.
$$
$$
\sup_{\sigma\in \Sigma_+} \dist_H(U_\sigma(t,0) D, \mathcal{M}_{\Sigma_+}) \leq C(D) e^{-\nu_2 t},
$$

Then we have:
\begin{equation}\label{zalux2}
\dist_H(A(T), \mathcal{M}_{\Sigma_-}) \leq \Bar{c} e^{\xi T}, \quad T\leq - \Tilde{T},
\end{equation}
\begin{equation}\label{zalux}
\dist_H(A(T), \mathcal{M}_{\Sigma_+}) \leq \Bar{c} e^{- \xi T}, \quad T\geq \Tilde{T},
\end{equation}
where $\Bar{c} \geq 0$, $\xi > 0$ and $\Tilde{T} \geq 0$ are constants.
\end{prop}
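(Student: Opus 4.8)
The plan is to prove the estimate \eqref{zalux} for $T\to+\infty$; the estimate \eqref{zalux2} for $T\to-\infty$ is entirely symmetric, with $\Sigma_+$, $\eta_2$, $\nu_2$ replaced by $\Sigma_-$, $\eta_1$, $\nu_1$ and the signs of the relevant exponents reversed. The starting point is the invariance of the pullback attractor together with the translation property \eqref{julim}. For $T>0$ and an evolution length $s\in(0,T)$ to be chosen, invariance gives $A(T)=U_g(T,T-s)A(T-s)$, and \eqref{julim} lets us rewrite $U_g(T,T-s)=U_{\theta_{T-s}g}(s,0)$, so that
\[
A(T)=U_{\theta_{T-s}g}(s,0)\,A(T-s),\qquad A(T-s)\subset D\subset\mathcal{B}.
\]
Thus every point of $A(T)$ has the form $U_{\psi}(s,0)x$ with $\psi=\theta_{T-s}g$ and $x\in D$, and $A(T)\subset U_\psi(s,0)D$.

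Next I would compare the genuine symbol $\psi=\theta_{T-s}g$ with a nearby symbol in the limit space. Since $\Sigma_+$ is compact and $\dist(\theta_{T-s}g,\Sigma_+)\le Ce^{-\eta_2(T-s)}$, choose $\sigma^\ast\in\Sigma_+$ with $d_\Xi(\psi,\sigma^\ast)\le 2Ce^{-\eta_2(T-s)}$. For $x\in D$ and $s\ge1$, the triangle inequality gives
\[
\dist(U_\psi(s,0)x,\mathcal{M}_{\Sigma_+})\le \|U_\psi(s,0)x-U_{\sigma^\ast}(s,0)x\|+\dist(U_{\sigma^\ast}(s,0)x,\mathcal{M}_{\Sigma_+}).
\]
The first term is controlled by the $(\Sigma,X)$-Lipschitz hypothesis \ref{lipschitz}: it is at most $L(s)\,d_\Xi(\psi,\sigma^\ast)\le 2Cc_1\,e^{\beta s}e^{-\eta_2(T-s)}$. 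The second term is controlled by the uniform exponential attraction of $\mathcal{M}_{\Sigma_+}$ applied to the bounded set $D$, giving at most $C(D)e^{-\nu_2 s}$. Taking the supremum over $x\in D$ yields
\[
\dist_H(A(T),\mathcal{M}_{\Sigma_+})\le 2Cc_1\,e^{\beta s-\eta_2(T-s)}+C(D)\,e^{-\nu_2 s}.
\]

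The decisive step, and the one I expect to be the main obstacle, is choosing $s$ so that both exponents are negative linear functions of $T$. The tension is that the Lipschitz constant grows like $e^{\beta s}$, while the symbol proximity $e^{-\eta_2(T-s)}$ and the attraction $e^{-\nu_2 s}$ improve with $s$; since there is no reason for $\beta$ to be dominated by $\eta_2$, the naive choice $s=T/2$ may fail. Writing $s=\lambda T$ with $\lambda\in(0,1)$, the two exponents become $[(\beta+\eta_2)\lambda-\eta_2]\,T$ and $-\nu_2\lambda T$. The second is always negative; the first is negative precisely when $\lambda<\eta_2/(\beta+\eta_2)$. Fixing any such $\lambda$ (one may optimise, but it is not needed) and setting
\[
\xi=\min\{\,\eta_2-(\beta+\eta_2)\lambda,\ \nu_2\lambda\,\}>0,
\]
we obtain $\dist_H(A(T),\mathcal{M}_{\Sigma_+})\le \Bar{c}\,e^{-\xi T}$ for a suitable constant $\Bar{c}$. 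The constraint $s=\lambda T\ge1$ needed for \ref{lipschitz} holds once $T\ge\Tilde{T}$ with $\Tilde{T}$ large enough, and the finitely many smaller values of $T$ are absorbed into $\Bar{c}$.

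For \eqref{zalux2} I would use the same decomposition $A(T)=U_{\theta_{T-s}g}(s,0)A(T-s)$ with $T\le-\Tilde{T}$; now $\theta_{T-s}g$ approaches $\Sigma_-$ because $T-s\to-\infty$, with $\dist(\theta_{T-s}g,\Sigma_-)\le Ce^{\eta_1(T-s)}$, and the attraction toward $\mathcal{M}_{\Sigma_-}$ supplies the factor $e^{-\nu_1 s}$. Setting $s=-\lambda T>0$ and repeating the computation produces exponents of the form $[\eta_1+(\eta_1-\beta)\lambda]\,T$ and $\nu_1\lambda T$, both of which make $e^{\xi T}\to0$ as $T\to-\infty$, provided $\lambda$ is taken small enough when $\beta>\eta_1$ (and for every $\lambda>0$ when $\beta\le\eta_1$). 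This yields \eqref{zalux2} and completes the argument; it is the quantitative, exponential-rate counterpart of the upper semicontinuity estimate in \cite[Theorem 8.2.1]{babin}.
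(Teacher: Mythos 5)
Your proof is correct and follows essentially the same route as the paper's: both decompose $A(T)=U_{\theta_{T-s}g}(s,0)A(T-s)$ via invariance and the translation property, replace the symbol $\theta_{T-s}g$ by a nearby element of $\Sigma_\pm$ using \ref{lipschitz}, and invoke the exponential attraction of $\mathcal{M}_{\Sigma_\pm}$ on $D$. The only cosmetic difference is the final balancing of the two exponential terms — you take $s=\lambda T$ directly, while the paper parametrizes by $\epsilon$ and minimizes — and the optimal choice of $\lambda$ recovers exactly the paper's rate $\xi=\eta_2\nu_2/(\nu_2+\beta+\eta_2)$.
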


\begin{proof}
We will prove only \eqref{zalux}, and the proof of \eqref{zalux2} is analogous. Let $T\geq 0$ and $t \geq 0$, then, for each $s\geq 0$, take $\sigma(s) \in \Sigma_+$ such that 
$$
\dist(\theta_s g, \sigma(s)) \leq 2 C e^{-\eta_2 s}
$$
Then, we have:
\begin{equation*}
\begin{split}
\dist_H(A(T), \mathcal{M}_{\Sigma_+}) & \leq \sup_{x\in A(T-t)} \dist(U_{\theta_{T-t}g}(t,0)x, U_{\sigma(T-t)}(t,0)x) \\
& + \sup_{x\in A(T-t)} \dist(U_{\sigma(T-t)}(t,0)x, \mathcal{M}_{\Sigma_+}) \\
& \leq 2C e^{-\eta_2(T-t)} c_1  e^{\beta t} + C(D) e^{-\nu_2 t} = Qe^{-\eta_2 T}e^{(\beta+\eta_2)t} +C(D) e^{-\nu_2t}
\end{split}
\end{equation*}
Let $c = C(D)$. If $\epsilon\in (0,c)$, we take $t_\epsilon = \log\left ( \frac{c}{\epsilon}\right )^{\frac{1}{\nu_2}}$, and we have:
\begin{equation}\label{gazarol}
\dist_H(A(T), \mathcal{M}_{\Sigma_+})\leq Qe^{-\eta_2 T}\left (  \frac{c}{\epsilon}\right )^{\frac{\beta+ \eta_2}{\nu_2}} +\epsilon.
\end{equation}

To minimize the function on the left hand side, we take:
$$
\epsilon_0 = c^{\frac{\beta+ \eta_2}{\beta+ \eta_2 +\nu_2}}\left ( \frac{\beta+ \eta_2}{\nu_2}\right )^{\frac{\nu_2}{\nu_2 +\beta+ \eta_2}}  (Qe^{-\eta_2 T})^{\frac{\nu_2}{\nu_2 + \beta+ \eta_2}}
$$

For $T$ large enough (say $T\geq \Tilde{T}$ for a large $\Tilde{T}$), $\epsilon_0 < c$. Using \eqref{gazarol} with $\epsilon = \epsilon_0$, we get:
$$
\dist_H(A(T), \mathcal{M}_{\Sigma_+}) \leq \Bar{c} e^{-\frac{\eta_2 \nu_2}{\nu_2 + \beta+ \eta_2} T}, \quad T\geq \Tilde{T}
$$
where $\Bar{c}>0$ is a constant.
\end{proof}

Finally, we prove Theorem \ref{inthuuuuglou}, which ties our results together, providing a way to estimate the box-counting dimension of the uniform attractor of dynamical systems without assuming that their symbol space is finite dimensional.

\begin{proof}[Proof of  Main Theorem \ref{inthuuuuglou}]
First, it follows from Lemma \ref{Zungtown} that:
$$
\mathcal{A}_{\Sigma} = \bigcup_{\sigma\in \Sigma}A_\sigma(0) \subset \mathcal{A}_{\Sigma_-} \cup \mathcal{A}_{\Sigma_+} \cup \bigcup_{t\in \R} A_g(t)
$$

Now, Theorem \ref{ralec} implies that the systems $\{U_\sigma(t,s): t\geq s\}_{\sigma\in \Sigma_-}$ and $\{U_\sigma(t,s): t\geq s\}_{\sigma\in \Sigma_+}$ have finite dimensional exponential uniform attractors $\mathcal{M}_{\Sigma_-} \supset \mathcal{A}_{\Sigma_-}$ and $\mathcal{M}_{\Sigma_+}\supset \mathcal{A}_{\Sigma_+}$ by minimality of the uniform attractor. Then we can apply Proposition \ref{barol} to show that the pullback attractors $\{A_g(t): t\in \R\}$ satisfy \ref{intentering} with an exponential function $f(t)$ and sets $\mathcal{A}_- = \mathcal{M}_{\Sigma_-}$ and $\mathcal{A}_+ = \mathcal{M}_{\Sigma_+}$. 

Then, we can estimate:
$$
d_B(\mathcal{A}_\Sigma) \leq \max\left ( d_B(\mathcal{M}_{\Sigma_-}), d_B(\mathcal{M}_{\Sigma_+}), d_B\left(\bigcup_{t\in \R} A_g(t) \right ) \right )
$$

This, along with \ref{intT2} and \eqref{lajym}, gives the desired estimate \eqref{intatroz}.
\end{proof}

\section{Applications}\label{app}

Now we study the reaction-diffusion problem \eqref{intazim}
Let $X = L^2(\O)$ and $\Xi:={\mathcal{C}}(\mathbb{R},L^2(\O)),$ the space of all continuous functions $\xi:\mathbb{R} \rightarrow L^2(\O),$ endowed with the Fréchet metric $d_{\Xi}$:
\begin{equation}\label{albatroz}
d_{\Xi}(\xi_{1},\xi_{2}):=\sum_{n=0}^{\infty}\frac{1}{2^{n}}\frac{d^{(n)}(\xi_{1},\xi_{2})}{1+d^{(n)}(\xi_{1},\xi_{2})},\qquad\xi_{1},\xi_{2}\in\Xi,
\end{equation}
where
$$
d^{(n)}(\xi_{1},\xi_{2}):=\max_{s\in[-n,n]} \|\xi_1(s) - \xi_2(s)\|,\qquad n\in\mathbb{N}. 
$$

Let $\mathcal{H}_\Xi(g)$ be the hull of the non-autonomous function $g\in\Xi$. It is not hard to see that the shift semigroup $\{\theta_t: t\in \R\}$ satisfies \ref{lipdriving}.

We consider the operator $A = - \Delta: H^2(\O) \cap H^1_0(\O) \rightarrow L^2(\O)$, and we know that $-A$ is a positive and self-adjoint operator with compact resolvent. Moreover, $-A$ generates an analytic semigroup on $X$ that satisfies an exponential estimate:
\begin{equation}
\|e^{-At}\| \leq M e^{-\alpha t} 
\end{equation}
where $\alpha>0$. We also denote by $A^\theta$ the fractional powers of $A$, $\theta \in \R$, and by $X_\theta$ the Banach spaces $(D(A^\theta),\|A^\theta \cdot \| )$.

Let $f \in \mathcal{C}(\R, \R)$ be a locally Lipschitz continuous function satisfying the following growth and sign conditions:
\begin{itemize}
    \item Growth: $|f(u) - f(v)| \leq C|u-v| \left (|u|^{\rho - 1} + |v|^{\rho-1} + 1 \right )$ with $\rho \leq 1 +\frac{4}{N}$.
    \item Sign: For some $C_0 \in \R$, $C_1 \geq 0$, we have $uf(u) \leq - C_0 u^2 +C_1 |u|$ for all $u\in \R$.
\end{itemize}

Moreover, let $g\in \mathcal{C}(\R, X)$ be a bounded, uniformly continuous function, so that by Arzelà-Ascoli Theorem the set $\Sigma = \mathcal{H}_\Xi(g)$ is compact in $\Xi$. The results in \cite{linfinito} can be applied with the same proofs for the non-autonomous case, to show that \eqref{intazim} has a unique solution $v:[\tau, \infty) \to X$ for every initial values $\tau \in \R$, $v_\tau \in X$, and we have a system of processes $\{U_\sigma(t,\tau): t\geq \tau\}_{\sigma\in \Sigma}$, where $U_\sigma(t,\tau)v_\tau$ is the solution of the following problem:
\begin{equation} \label{marjax}
\begin{split}
&v_t  = \Delta v + f( v )+\sigma(t),\quad x\in\mathcal{O},\ t>\tau,\\
&v (x,\tau)= v _{\tau}(x),\quad x\in\mathcal{O},\\
&v (x,t)=0,\quad x\in\partial\mathcal{O},\ t\geqslant 0,
\end{split} 
\end{equation}

Moreover, this system of processes has an uniform attractor $\A_\Sigma \subset L^\infty(\O)$ that is bounded in $L^\infty(\O)$ and in $Y$. Then there exists $M \geq 0$ such that: 
$$
\sup_{u\in \mathcal{A}_\Sigma} \sup_{x\in \O} |u(x)| \leq M.
$$
Therefore, we can modify the function $f \in \mathcal{C}(\R, \R)$ in the set $\R \setminus (-M,M)$, in such a way that the new function $f_{\text{new}} \in \mathcal{C}^1(\R,\R)$ is globally Lipschitz and the problem 
\begin{equation}
\begin{split}
&v_t  = \Delta v + f_{\text{new}}( v )+g(t),\quad x\in\mathcal{O},\ t>\tau,\\
&v (x,\tau)= v _{\tau}(x),\quad x\in\mathcal{O},\\
&v (x,t)=0,\quad x\in\partial\mathcal{O},\ t\geqslant 0.
\end{split} 
\end{equation}
%
has the same uniform attractor $\A_\Sigma$ as the problem \eqref{intazim}. Moreover, the process $\{U_g(t,s):t\geq s\}$ has a pullback attractor $\{A(t):t\in \R\}$ such that $\A_p := \bigcup_{t\in R} A(t) \subset \A_\Sigma$ and $\A_p$ is compact in $L^2(\O)$.

This reasoning implies that we can study the uniform attractor of problem \eqref{intazim} without loss of generality with a globally Lipschitz function $f\in \mathcal{C}^1(\R, \R)$. From now on, suppose the nonlinearity $f$ is indeed globally Lipschitz, and that $g(\R)$ is precompact in $X$ — this implies, in particular, that $\{\sigma(t):t\in \R, \, \sigma \in \Sigma\}$ is bounded in $X$. We denote by $\{U_\sigma(t,\tau): t\geq \tau\}_{\sigma\in \Sigma}$ the system of processes generated by \eqref{intazim}, and for simplicity we denote $F_\sigma(t,v) = f(v)+ \sigma(t)$.

We suppose that there exist uniformly continuous and bounded functions $g_\pm \in \Xi$ such that:
\begin{equation}\label{guarax}
\|g(t)-g_-(t)\| \leq Q_1 e^{\eta_1 t}, \quad \forall t\in \R,
\end{equation}
\begin{equation}\label{mutambo}
\|g(t)-g_+(t)\| \leq Q_2 e^{-\eta_2 t}, \quad \forall t\in \R.
\end{equation}

Let $\Sigma_- = \mathcal{H}_\Xi(g_-)$, $\Sigma_+ = \mathcal{H}_\Xi(g_+)$. Then the reaction-diffusion equation \eqref{marjax} is well-posed in $X$ and its solutions generate systems $\{U_{\sigma}(t,\tau)\}_{\sigma\in\Sigma_-}$ and $\{U_{\sigma}(t,\tau)\}_{\sigma\in\Sigma_+}$ in $X$. If $\Lambda = \Sigma \cup \Sigma_+ \cup \Sigma_-$, we have $\{\sigma(t):t\in \R, \sigma \in \Lambda\}$ bounded in $X$. 

In addition, the systems $\{U_{\sigma}(t,\tau)\}_{\sigma\in\Sigma_-}$ and $\{U_{\sigma}(t,\tau)\}_{\sigma\in\Sigma_+}$ have uniform attractors $\mathcal{A}_{\Sigma_-}$ and $\mathcal{A}_{\Sigma_+}$, respectively. All the systems $\{U_\sigma(t,s):t\geq s\}_{\sigma \in \Sigma}$, $\{U_\sigma(t,s):t\geq s\}_{\sigma \in \Sigma_-}$ and $\{U_\sigma(t,s):t\geq s\}_{\sigma \in \Sigma_+}$ have uniformly absorbing sets bounded in $L^\infty(\mathcal{O})$ and in $Y$ (again, following the results on \cite{linfinito}). We call these sets $\mathcal{B}$, $\mathcal{B}_1$ and $\mathcal{B}_2$, respectively.   

We suppose that $\Sigma_-$ and $\Sigma_+$ have finite box-counting dimension. This can be shown when $g_-$ and $g_+$ are quasiperiodic functions, as it is presented in \cite{cuietal}.

Now we prove some continuity properties. First, it is easy to see using the variation of constants formula and Gronwall's inequality that:
\begin{equation}
\|U_\sigma(t,s)x - U_\sigma(t,s)y\| \leq M e^{\eta(t-s)} \|x-y\|,\quad t\geq s, \quad x, y\in X
\end{equation}
with $M$, $\eta \in \R$ independent of $\sigma \in \Lambda$.

Then, let us show the Hölder continuity in time. First, notice that if $K$ is bounded in $L^2(\O)$, the set:
\begin{equation*}
J_K = \{U_\sigma(t,0)x: t\geq 0, \, \sigma\in \Lambda, \, x\in K\}
\end{equation*}
is bounded in $L^2(\O)$ as well. Here, we used the existence of the uniform attractors $\A_\Sigma$, $\A_{\Sigma_-}$ and $\A_{\Sigma_+}$.

Let $0\leq \Tilde{t} \leq t \leq 1$, $x\in \B$, then, by formula of variation of constants, if $\delta \in (0,1/2)$:
\begin{equation*}
\begin{split}
\|U_\sigma(t+s,s)x-U_\sigma(\Tilde{t}+s,s)x\|  \leq & \left \|(e^{-At}-e^{-A\Tilde{t}})x \right \| + \left \| \int_{\Tilde{t}+s}^{t+s}e^{-A(t+s-\theta)}F_\sigma(\theta, U_\sigma(\theta,s)x)d\theta \right \| \\
& + \left \| \int_{s}^{\Tilde{t}+s} \left ( e^{-A(t+s-\theta)} - e^{-A(\Tilde{t}+s-\theta)} \right ) F_\sigma(\theta, U_\sigma(\theta,s)x) d\theta \right \|.
\end{split}
\end{equation*}

We will estimate the right hand side terms in the above equations one by one. For the first term, we have:
\begin{equation*}
\left \|(e^{-At}-e^{-A\Tilde{t}})x \right \| = \left \|(e^{-A(t-\Tilde{t})}-I)A^{-\delta}e^{-A\Tilde{t}}A^{\delta}x \right \| \leq C |t-\Tilde{t}|^{\delta} \sup_{x\in \B} \|A^\delta x\|  
\end{equation*}

For the second term, we have:
\begin{equation*}
\left \| \int_{\Tilde{t}+s}^{t+s}e^{-A(t+s-\theta)}F(\theta, U_\sigma(\theta,s)x)d\theta \right \| \leq C |t-\Tilde{t}| \sup_{x\in J_{\B}} \, \sup_{\theta\in \R} \, \sup_{\sigma\in \Lambda}\|F_\sigma(\theta,x)\|
\end{equation*}

And for the third term, we get:
\begin{equation*}
\begin{split}
& \left \| \int_{s}^{\Tilde{t}+s} \left ( e^{-A(t-\Tilde{t})} - I \right ) A^{-\delta} A^\delta e^{-A(\Tilde{t}+s-\theta)} F_\sigma(\theta, U_\sigma(\theta,s)x) d\theta \right \| \\
& \leq \left \| \int_{s}^{\Tilde{t}+s} C |t-\Tilde{t}|^\delta \frac{e^{-\alpha(\Tilde{t}+s-\theta)}}{(\Tilde{t}+s - \theta)^\delta} d\theta \,  \sup_{x\in J_{\B}} \, \sup_{\theta\in \R} \, \sup_{\sigma\in \Lambda}\|F_\sigma(\theta,x)\| \right \| \\
& \leq \Tilde{C} |t- \Tilde{t}|^\delta \sup_{x\in J_{\B}} \, \sup_{\theta\in \R} \, \sup_{\sigma\in \Lambda}\|F_\sigma(\theta,x)\|
\end{split}
\end{equation*}

Since $\{\sigma(t):t\in \R, \, \sigma \in \Lambda\}$ and $J_{\B}$ are bounded in $X$ and $f:X\to X$ is globally Lipschitz, it is easy to see that
$$
\sup_{x\in J_{\B}} \, \sup_{\theta\in \R} \, \sup_{\sigma\in \Lambda}\|F_\sigma(\theta,x)\|
$$
is finite. Then, we conclude that:
\begin{equation*}
\begin{split}
\|U_\sigma(t+s,s)x-U_\sigma(\Tilde{t}+s,s)x\|  \leq K |t-\Tilde{t}|^\delta
\end{split}
\end{equation*}
uniformly in $x\in \B$ and $\sigma \in \Lambda$, and $t$, $\tilde{t}$ in bounded sets of $\R$.

Using the variation of constants formula, it is not hard to show the smoothing property: if $D \subset X$ is bounded, there exists a function $\kappa: \R^+ \to \R^+$ such that:
$$
    \underset{\sigma\in \Lambda}{\sup} \|U_\sigma(t,0)x - U_\sigma(t,0)y\|_Y \leq \kappa(t) \|x-y\|_X, \quad \forall \, x,y \in D.
$$

Finally, with the aid of Lemma 5.1 in \cite{cuietal}, we can show that \ref{lipschitz} follows for every $\sigma_1, \sigma_2 \in \Lambda$.

\textit{Box-counting dimension of $\mathcal{A}_\Sigma$.}

Now we will estimate the box-counting dimension of the uniform attractor of \eqref{intazim} using Theorem \ref{inthuuuuglou}. 

Let $\sigma \in \omega(g)$, then there exists a sequence $t_n \to \infty$ such that $\theta_{t_n} g \to \sigma$. Then 
$$\|\theta_{t_n} g_+ - \sigma \|_\Xi \leq \|\theta_{t_n} g_+ - \theta_{t_n} g \|_\Xi + \|\theta_{t_n} g - \sigma \|_\Xi \overset{n\to \infty}{\longrightarrow} 0$$
where we used \eqref{mutambo} and the definition of $\Xi$.

It follows that $\theta_{t_n} g_+ \to \sigma$ and $\sigma\in \mathcal{H}_\Xi(g^+)$. We have just shown that $\omega(g) \subset \mathcal{H}_\Xi(g^+)$, and analogously, $\alpha(g) \subset \mathcal{H}_\Xi(g^-)$.

For the exponential estimates \eqref{intblux} and \eqref{intravael}, first remember that $\{\sigma(t):t\in \R, \sigma \in \Lambda\}$ is bounded in $X$, therefore we can increase $\eta_1$ and reduce $\eta_2$ in \eqref{guarax} and \eqref{mutambo} to satisfy $1<e^{\eta_1} < 2$ and $1 < e^{\eta_2} < 2$, from which follows that:
$$
\dist_\Xi(\theta(t) g, \theta(t) g_+) \leq \Bar{c}e^{-\eta_2 t}, \quad t\in \R
$$
$$
\dist_\Xi(\theta(t) g, \theta(t) g_-) \leq \Bar{c}e^{\eta_1 t}, \quad t\in \R
$$
where $\Bar{c}>0$ is a constant.

Therefore, all hypotheses of Theorem \ref{inthuuuuglou} apply and the uniform attractor $\mathcal{A}_\Sigma$ has finite box-counting dimension.

\subsection{An infinite dimensional symbol space with finite dimensional uniform attractor}

We will show that in \eqref{intazim}, the function $g$ can satisfy all the hypotheses we asked in Section \ref{app} and have an infinite-dimensional hull. This shows that our results can be used to bound the box-counting dimension of the uniform attractor associated to dynamical systems that have infinite dimensional symbol space, which is an advance in relation to the existing methods. 

First, take any compact infinite dimensional set $A \subset X$. For example, $A$ can be the closed unit ball of $H^1_0(\O)$ regarded as a subset of $X = L^2(\O)$. Then we use the classic topology result in \cite{immer} to guarantee the existence of a function $\alpha:\mathcal{C} \to A$ continuous and surjective, where $\mathcal{C}$ is the Cantor set:
$$
\mathcal{C} = \left\{ x \in [0,1] : x \text{ has a ternary expansion without using the digit } 1 \right\}
$$

Then, we can extend the function $\alpha$ to a function $\Tilde{\alpha}:[0,1] \to X$ defined on the interval $[0,1]$ using the following strategy: we use the value of $\alpha$ for points in the Cantor set $\mathcal{C}$, and in the removed intervals $(a,b) \subset [0,1] \setminus \mathcal{C}$, $a$, $b \in \mathcal{C}$, we define $\tilde{\alpha}$ as a linear function such that $\tilde{\alpha}(a) =\alpha(a)$ and $\tilde{\alpha}(b) =\alpha(b)$.

Then, we can extend the function $\tilde{\alpha}$ trivially in the line creating the function $g:\R \to X$ such that 
\begin{equation}\label{jarx}
g(t) = \begin{cases} 
0 & \text{for } t < -1 \text{ or } t > 2 \\
\tilde{\alpha}(t) & \text{for } t \in [0,1] \\
(1+t)\tilde{\alpha}(0) & \text{for } t \in [-1,0] \\
(2-t)\tilde{\alpha}(1) & \text{for } t \in [1,2]
\end{cases}
\end{equation}

It is obvious, then, that $g(\R)$ contains $A$ and therefore has infinite box-counting dimension.

Consider now the map $\Psi: \{\theta_t g : t\in \R\} \to X$ given by $\Psi(\theta_t g) = g(t)$, $t\in \R$. It is easy to see that this map is well-defined and Lipschitz continuous from $(\Xi, d_\Xi)$ into $X$, where $d_\Xi$ is the Fréchet metric defined on \eqref{albatroz}. Therefore, 
$$
d_B(g(\R)) \leq d_B(\{\theta_t g : t\in \R\},
$$

Then $\{\theta_t g : t\in \R\}$, and $\Sigma 
= \mathcal{H}_\Xi(g)$ must have infinite box-counting dimension. Nevertheless, it is easy to see that we can apply the reasoning used in Section \ref{app} with $g_+$ and $g_-$ equal to the zero function $\mathbf{0}:\R \to X$. The asymptotic hulls will be trivial: $\Sigma_-=\Sigma_+ = \mathcal{H}_\Xi(\mathbf{0}) = \{\mathbf{0}\}$, which clearly are finite dimensional.

Therefore, if we consider equation \eqref{intazim} with the corresponding hypotheses and the specific forcing function $g$ given by \eqref{jarx}, the differential equation will generate a system of processes $\{U_\sigma(t,s):t\geq s\}_{\sigma\in \mathcal{H}_\Xi(g)}$ that possesses a finite box-counting dimension uniform attractor $\mathcal{A}_\Sigma$ — as showed in Section \ref{app}, even though the symbol space $\Sigma = \mathcal{H}_\Xi(g)$ has infinite box-counting dimension in $\Xi$.

	\bibliographystyle{acm}

	\bibliography{bibliography}
	
\end{document}